\definecolor{labelkey}{rgb}{0.6,0,0}
\numberwithin{equation}{section}
\providecommand{\ip}[1]{\langle#1\rangle}
\providecommand{\abs}[1]{\left\lvert#1\right\rvert}
\providecommand{\norm}[1]{\left\|#1\right\|}
\newcommand{\K}{\mathcal{K}}
\def\R{\mathbb{R}}
\def\K{\mathcal{K}}
\def\L{\mathcal{L}}
\def\N{\mathbb{N}}
\def\eps{\varepsilon}
\newtheorem{theorem}{Theorem}[section]
\newtheorem{lemma}[theorem]{Lemma}
\newtheorem{corollary}[theorem]{Corollary}
\newtheorem{proposition}[theorem]{Proposition}
\theoremstyle{remark}
\newtheorem{remark}[theorem]{Remark}
\title[Scattering map for Vlasov-Poisson]{Scattering map for the Vlasov-Poisson system}
\author{Patrick Flynn}
\address{Brown University}
\email{patrick\_flynn1@brown.edu}
\author{Zhimeng Ouyang}
\address{Brown University}
\email{zhimeng\_ouyang@brown.edu}
\author{Benoit Pausader}
\address{Brown University}
\email{benoit\_pausader@brown.edu}
\author{Klaus Widmayer}
\address{\'Ecole Polytechnique F\'ed\'erale de Lausanne}
\email{klaus.widmayer@epfl.ch}
\begin{document}
\begin{abstract}

We construct (modified) scattering operators for the Vlasov-Poisson system in three dimensions, mapping small asymptotic dynamics as $t\to -\infty$ to asymptotic dynamics as $t\to +\infty$. The main novelty is the construction of modified wave operators, but we also obtain a new simple proof of modified scattering.

 Our analysis is guided by the Hamiltonian structure of the Vlasov-Poisson system. Via a pseudo-conformal inversion we recast the question of asymptotic behavior in terms of local in time dynamics of a new equation with singular coefficients which is approximately integrated using a generating function.
\end{abstract}

\maketitle

\section{Introduction}
The three dimensional Vlasov-Poisson system describes the evolution of a particle distribution\footnote{To be precise, the physically relevant quantity $f(t,x,v)=\mu^2(t,x,v)$ is the square of our unknown $\mu$ -- see also \cite{IPWW2020}.} $\mu(t,x,v):\R\times\R^3\times\R^3\to\R$ satisfying
\begin{equation}\label{VP}
\begin{split}
\left(\partial_t+v\cdot\nabla_x\right)\mu+\lambda\nabla_x\psi\cdot\nabla_v\mu=0,\qquad\Delta_x\psi(t,x)=\rho(t,x),\qquad \rho(t,x)=\int_{\mathbb{R}^3}\mu^2(t,x,v) dv.
\end{split}
\end{equation}
This is a model for a continuum limit of a classical many-body problem with Newtonian self-interactions through a force field  $\nabla_x\psi$ that can be attractive ($\lambda=-1$) as in a galactic setting, or repulsive ($\lambda=1$) as in a plasma or ion gas, and which is generated by the spatial density $\rho(t,x)$ of the particle distribution.

The mathematical theory for the initial value problem associated to \eqref{VP} is classical and guarantees the global existence of unique solutions under suitable assumptions on the initial data \cite{BD1985,LP1991,Pfa1992,Sch1991}. In recent years there has been progress in understanding the long time asymptotic behavior: sharp decay rates of the density and force field are known in some settings \cite{HRV2011,IR1996,Pan2020,Perthame1996,Smu2016,Wan2018}, and it has been shown that for sufficiently small initial data $\mu_0$ the problem \eqref{VP} exhibits a modified scattering dynamic \cite{CK2016,IPWW2020} defined in terms of a limit distribution $\mu_\infty$ and an asymptotic force field $E_\infty[\mu_\infty]$, defined by inverting the roles of $x$ and $v$:
\begin{equation}\label{DefEInftyField}
\begin{split}
 E_\infty[\mu](v)&:= \frac{1}{4\pi}\iint\frac{v-w}{\vert v-w\vert^3}\cdot\mu^2(y,w)\,dydw.
\end{split}
\end{equation}

In this paper, using pseudo-conformal inversion, we prove the converse statement, namely that any solution of the asymptotic dynamic arises in a unique way as a limit of a solution to \eqref{VP}, i.e. we construct the wave operator $\mu_\infty\mapsto\mu_0$. Thus we obtain the existence of a scattering operator linking the asymptotic behavior in the past to the asymptotic behavior in the future ($\mu_{-\infty}\mapsto\mu_0\mapsto\mu_\infty$). 

Our main results can be summarized as follows:
\begin{theorem}\label{thm:main_simple}
 There exists $\varepsilon>0$ such that:
\begin{enumerate}[label=(\roman*)]
\item\label{it:modscat} (Global existence and modified scattering) Given $\mu_1(x,v)$ satisfying
  \begin{equation}\label{eq:modscat_assump}
   \norm{\mu_1}_{L^2_{x,v}}+\norm{\ip{x-v}^2\mu_1}_{L^\infty_{x,v}}+ \norm{\nabla_{x,v}\mu_1}_{L^\infty_{x,v}}\leq \eps,
  \end{equation}
  there exists a unique global strong solution $\mu$ of the initial value problem for \eqref{VP} with $\mu(1,x,v)=\mu_1(x,v)$. In addition, there exist $\mu_{\infty}(x,v)$ and $E_{\infty}=E_\infty[\mu_{\infty}]$ as in \eqref{DefEInftyField} such that, locally uniformly in $(x,v)$,
  \begin{equation}\label{eq:modscat}
   \mu(t,x+tv-\lambda \ln( t)E_{\infty}(v),v)\to \mu_{\infty}(x,v),\qquad t\to+ \infty.
  \end{equation}

\item\label{it:waveops} (Existence of modified wave operators) Given $\mu_\infty\in W^{2,\infty}(\R^3_a\times\R^3_b)$ and $E_\infty=E_\infty[\mu_\infty]\in W^{3,\infty}(\R^3)$ as in \eqref{DefEInftyField} satisfying
  \begin{equation}\label{eq:waveop_assump}
   \norm{\mu_\infty}_{L^2_{a,b}}+\norm{\ip{a}^{5}\mu_\infty}_{L^\infty_{a,b}}+\Vert \langle a\rangle\nabla_{a,b}\mu_\infty\Vert_{L^\infty_{a,b}}+\Vert\langle a\rangle^2\nabla^2_{a,b}\mu_\infty\Vert_{L^\infty_{a,b}}+\norm{E_\infty}_{W^{3,\infty}}<\infty,
  \end{equation}
  there exists a unique strong global solution $\mu$ of \eqref{VP} for which \eqref{eq:modscat} holds.
  
\item\label{it:ScatOp} (Scattering map) For any asymptotic state $\mu_{-\infty}$ with $E_\infty[\mu_{-\infty}]\in W^{3,\infty}(\mathbb R^3)$ as in \eqref{DefEInftyField},
\begin{equation*}
\Vert \mu_{-\infty}\Vert_{L^2_{a,b}}+\Vert\langle a,b\rangle^5\mu_{-\infty}\Vert_{L^\infty_{a,b}}+\Vert\langle a\rangle\nabla_{a,b}\mu_{-\infty}\Vert_{L^\infty_{a,b}}+\Vert\langle a\rangle^2\nabla_{a,b}^2\mu_{-\infty}\Vert_{L^\infty_{a,b}}\le \varepsilon,
\end{equation*}
 there exists a unique strong solution $\mu$ of \eqref{VP}, and $\mu_{\infty}\in L^2_{a,b}\cap L^\infty_{a,b}$ such that 
 \begin{equation}\label{eq:waveops}
  \mu(t,x+tv\mp\lambda\ln(\langle t\rangle )E_{\pm\infty}(v),v)\to \mu_{\pm\infty}(x,v),\qquad t\to\pm\infty.
 \end{equation}
\end{enumerate}
\end{theorem}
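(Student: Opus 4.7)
The plan is to derive part \ref{it:ScatOp} by composing parts \ref{it:modscat} and \ref{it:waveops} using the time-reversal symmetry of \eqref{VP}. Concretely, the transformation $\mu(t,x,v)\mapsto\widetilde{\mu}(t,x,v):=\mu(-t,x,-v)$ sends solutions of \eqref{VP} to solutions and exchanges the roles of $\pm\infty$. Since the norms in \eqref{eq:waveop_assump} are invariant under $v\mapsto -v$ and $E_\infty[\mu_{-\infty}]\in W^{3,\infty}$ by hypothesis, applying part \ref{it:waveops} to $(x,v)\mapsto\mu_{-\infty}(x,-v)$ and then undoing the reflection yields a unique global strong solution $\mu$ of \eqref{VP} realizing the $t\to-\infty$ statement in \eqref{eq:waveops}. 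This step is, up to a diffeomorphism of the asymptotic phase space, the content of \ref{it:waveops}, and the smallness hypothesis of \ref{it:ScatOp} is a fortiori stronger than the finiteness hypothesis of \ref{it:waveops}.

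To extract the $+\infty$ behavior from this $\mu$, I would restrict to $t=1$, set $\mu_1:=\mu(1,\cdot,\cdot)$, and verify that $\mu_1$ satisfies the smallness assumption \eqref{eq:modscat_assump} of part \ref{it:modscat}. At leading order the wave operator construction gives $\mu_1(x,v)\approx\mu_{-\infty}(a,b)$ under the change of frame $(a,b)=(x-v+\lambda\ln\langle 1\rangle E_{-\infty}(v),v)$, which is a smooth diffeomorphism at the finite time $t=1$ whose Jacobian is controlled by $\|E_{-\infty}\|_{W^{1,\infty}}$. Consequently $\|\mu_{-\infty}\|_{L^2_{a,b}}$, $\|\langle a,b\rangle^5\mu_{-\infty}\|_{L^\infty_{a,b}}$, and $\|\langle a\rangle\nabla_{a,b}\mu_{-\infty}\|_{L^\infty_{a,b}}\lesssim\varepsilon$ translate into the three norms in \eqref{eq:modscat_assump} (the weight $\langle a\rangle$ becoming $\langle x-v\rangle$), so that $\mu_1$ falls within the regime of part \ref{it:modscat}. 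The latter then produces $\mu_{+\infty}\in L^2_{a,b}\cap L^\infty_{a,b}$ and the $t\to+\infty$ limit in \eqref{eq:waveops}. Uniqueness is chained from the two uniqueness statements already in hand.

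\textbf{Main obstacle.} The delicate point is the quantitative passage from $\mu_{-\infty}$ to $\mu_1$: the wave operator furnishes $\mu$ only as the sum of the leading pushforward of $\mu_{-\infty}$ and a correction coming from the lower-order terms in the pseudo-conformal equation. One must verify that these corrections are also $O(\varepsilon)$ and that they respect the weighted gradient bound (in particular, differentiating the correction cannot produce worse than $\langle x-v\rangle^{-2}$ decay). This is essentially a quantitative reading of the argument establishing \ref{it:waveops}, showing that the map $\mu_{-\infty}\mapsto\mu(1,\cdot,\cdot)$ is Lipschitz near the origin in the relevant topologies, rather than genuinely new analysis.
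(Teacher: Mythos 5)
Your overall strategy is the same as the paper's: compose the (time-reversed) wave operator of part \ref{it:waveops} with the forward modified-scattering result of part \ref{it:modscat}, and the main obstacle you flag (quantitative uniformity of the bounds coming out of the wave-operator construction) is indeed the crux. However, there is a gap in how you pass from those estimates to the smallness of $\mu_1=\mu(1,\cdot,\cdot)$, and I want to flag it because it would make the argument as written fail to close.

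After applying Theorem~\ref{WaveOpsThm} (or part~\ref{it:waveops}) to the reflected data and undoing the reflection, the quantitative estimates \eqref{PropagationLawsSigma} translate, via $\mathcal{I}$, into smallness of $\mu(t)$ only for $t\le -1$ (the pseudo-conformal time $s=1/t$ lives in $[0,1]$ there). They say nothing directly about $\mu(1,\cdot)$, and $\mathcal{I}$ has a singularity at $t=0$ that blocks transferring those estimates across the origin; ``reading the wave operator argument quantitatively'' does not reach $t=1$. Your change-of-frame formula $(a,b)=(x-v+\lambda\ln\langle 1\rangle E_{-\infty}(v),v)$ is in fact the one at $t=-1$ (after accounting for the $\mp$ sign in \eqref{eq:waveops}), which signals the same confusion. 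The correct bridge is the one the paper makes explicit: from \eqref{PropagationLawsSigma} record
\begin{equation*}
\Vert \mu(-1)\Vert_{L^2_{x,v}}+\Vert \langle x,v\rangle^5\mu(-1)\Vert_{L^\infty_{x,v}}+\Vert\nabla_{x,v}\mu(-1)\Vert_{L^\infty_{x,v}}\lesssim \varepsilon,
\end{equation*}
then invoke classical local existence/propagation on the finite interval $[-1,1]$ to transfer these bounds to $t=1$, and only then apply part~\ref{it:modscat}. Once this step is inserted your proposal agrees with the paper's proof.
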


We call the map defined in a neighborhood of the origin in the Schwarz space through \ref{it:ScatOp} above,
\begin{equation}\label{ScatteringMap}
\mathcal{S}:\mu_{-\infty}\mapsto\mu_{\infty}
\end{equation}
 the {\it Scattering map}. We refer to Theorem \ref{ModScatThm} for a more precise statement of our results for \ref{it:modscat} and to Theorem \ref{WaveOpsThm} for a more precise statement of \ref{it:waveops}. In particular we note that the force field has optimal decay $\vert \nabla\psi\vert\lesssim \langle t\rangle^{-2}$ in all cases.

\begin{remark}\label{rem:intro}
We comment on some points:
\begin{enumerate}[leftmargin=*]

\item The main novelty of this work is the construction of the wave operator \ref{it:waveops}. While the small data modified scattering dynamic \eqref{eq:modscat} was already obtained in \cite{IPWW2020}, the present result \ref{it:modscat} is also of interest since it is stronger and the approach, while less generalizable, leads to a simple derivation of the asymptotic dynamic. We also refer to \cite{PW2020} for yet another point of view on the modified scattering as arising from mixing.

\item Our topology for small data/modified scattering in \eqref{eq:waveop_assump} is weaker than in all other works on asymptotic behavior that we are aware of \cite{BD1985,CK2016,HRV2011,IPWW2020,Smu2016,Wan2018,Pan2020}. It is unclear what the optimal topology is, but to get almost Lipschitz bounds on the force field, by \eqref{CE}, one cannot work in a much weaker setting than ours.

\item We also obtain propagation of regularity: assuming more regularity on the initial data we obtain higher regularity on the final (scattering) data and vice-versa.

\item\label{it:inf_conslaws} Our initial data for scattering may have infinite energy and momentum; in addition, a simple modification also allows for initial data of infinite mass. It is unclear which role (if any) the physical conservation laws play for the asymptotic behavior.

\item It is worth noting a curious fact: our proof can be adapted directly to the case of a plasma of two species (ions and electrons). In this case, using \ref{it:waveops}, one can construct solutions for which the asymptotic electric field profile $E_\infty\equiv 0$ vanishes and the solutions {\it scatter linearly}. In this case, the same equation allows two different asymptotic behaviors. It remains to be understood to which extent the linear scattering is nongeneric (say in case the total charge vanishes).
\end{enumerate}
\end{remark}

\subsubsection*{About the Proof of Theorem \ref{thm:main_simple}}

In the spirit of the prior work \cite{IPWW2020} (see also \cite{LMR2008,LMR2008-JAMS}), we build on parallels between kinetic and dispersive equations. In particular, the Hamiltonian structure of \eqref{VP} guides our analysis.

The simplest case for asymptotic behavior of a nonlinear equation is {\it linear scattering} when the nonlinearity can simply be neglected to model asymptotic dynamics. For the Vlasov-Poisson system, this happens in the setting of Landau damping \cite{BMM2016,GNR2020,MV2011}, the ion/screened problem \cite{BMM2018,HNR2019}, and in higher dimensions \cite{Smu2016}, where solutions asymptotically satisfy $\mathcal{T}(\mu)=0$ with $\mathcal{T}$ defined in \eqref{FreeStreaming}.
The asymptotic behavior of \emph{modified scattering} as in \eqref{eq:modscat} and \eqref{eq:waveops} can be viewed as a manifestation of the unrelenting relevance of nonlinear interactions in \eqref{VP} throughout time. In \eqref{VP} the nonlinear, long-range interactions are governed by a force field which does not decay fast enough to produce only a finite correction as time tends to infinity and produces the logarithmic corrections identified in the above theorem -- see also \cite{CK2016,IPWW2020,PW2020} for the Vlasov-Poisson setting, and \cite{HPTV2015,HN1998,IT2015,KP2011,Ouy2020} for related results on other equations.

In order to understand the asymptotic behavior, we need to $(i)$ identify a mechanism for decay (here dispersion), $(ii)$ prove global existence, $(iii)$ isolate an asymptotic dynamic and $(iv)$ prove convergence to it. We offload the dispersion to the pseudo-conformal transform $\mathcal{I}$ which compactifies time and reduces global existence to local existence for a singular equation  -- see also \cite{Bou1999,CN2018,Chr1986,Tao2009} for similar ideas. At this point, the problem merely reduces to establishing convergence at the image of infinity, $s=0$, where,  however, the equation has a violent singularity. We extend the force field $E=-\nabla\psi$ via a variant of the continuity equation:
\begin{equation}\label{CE}
\partial_tE+\nabla\Delta^{-1}\hbox{div}({\bf j})=0,\qquad {\bf j}(t,q)=\int p\gamma^2(t,q,p)dp,
\end{equation}
which does not involve the (singular) acceleration and provides good control of $E$ so long as we control some moments of $\gamma$. Once we obtain convergence of $E$ to a fixed asymptotic field $E_0$, the equation becomes a simple  perturbation of transport by a shear term:
\begin{equation*}
\begin{split}
\left(\partial_s+\lambda s^{-1}E_0(q)\cdot\nabla_p\right)\gamma=O(1),
\end{split}
\end{equation*}
which is easily integrated to recover the dynamic originally isolated in \cite{IPWW2020}. To make this rigorous, we need to propagate mild control on appropriate norms. This is done through a bootstrap that allows some deterioration over time in different ways depending on the scenarios: growth of nonconvergent norms in the case of modified scattering and loss of moment in the case of wave operators (where we start from the singular time $s=0$).

The proof of part \ref{it:modscat} shows how natural the pseudo-conformal inversion $\mathcal{I}$ is to study asymptotics of \eqref{VP}: working with only moments that are conserved in the linear evolution of \eqref{VP} one directly obtains global solutions in a bootstrap argument. Additional regularity as in \eqref{eq:modscat_assump} is easily propagated to yield unique strong solutions and to recover the asymptotic behavior \eqref{eq:modscat} -- see Section \ref{sec:mod_scat}.

Part \ref{it:waveops} is proved using a canonical change of variables in \eqref{NewVP} to mitigate the strong singularity at $s=0$ -- see Section \ref{sec:wave_ops}. The Cauchy problem for the resulting equations \eqref{eq:sigma} can in fact be (locally) solved starting from $s=0$ for a sufficiently large class of initial data as in \eqref{eq:waveop_assump}. Again, moments are easily bootstrapped, while propagating derivatives requires to identify a proper weighted norm which compensates for the {\it ill-conditioned} Hessian of the new Hamiltonian by allowing one loss of moment. Since via $\mathcal{I}$ this corresponds to a strong solution on $[T,\infty)$ for some $T>0$, classical theory as in \cite{LP1991} then gives a global solution.

Finally \ref{it:ScatOp} follows simply by combining \ref{it:waveops} (backwards in time) to go from past-asymptotic data to initial data and \ref{it:modscat} to go from initial data to future asymptotic data.

\subsubsection*{Open Questions} We list some open questions which remain outstanding:

\begin{itemize}

\item Is there a topology that makes the scattering operator in \eqref{ScatteringMap} an endomorphism?

\item In the plasma case $\lambda=+1$, what is the asymptotic behavior for large data? Solutions are global, there are no nontrivial equilibriums and the wave operators are defined for large data, so it is tempting to believe that Theorem \ref{thm:main_simple} may be extended to all solutions (see \cite{Pan2020} for radial data).

\item In the gravitational case $\lambda=-1$, is there a ``ground state'', i.e.\ a smallest solution which does not scatter? Are there solutions which satisfy some form of modified scattering towards a nonzero stationary solution? This appears very challenging, but we note \cite{PW2020} for an example of stability result around a nonzero equilibrium in a related setting and \cite{DSS2004} for related works.

\end{itemize}

\subsection{Pseudo-Conformal Inversion}\label{ssec:pseudo-conf}

We define the involution of $\mathbb{R}\times\mathbb{R}^3\times\mathbb{R}^3$ given by the \emph{pseudo-conformal inversion} (see also \cite{LMR2008})
\begin{equation}\label{PCT}
\begin{split}
 \mathcal{I}:(t,x,v)\mapsto (1/t,x/t,x-tv).
\end{split}
\end{equation}
This transformation interacts favorably with free streaming,
\begin{equation}\label{FreeStreaming}
\mathcal{T}:=\partial_t+v\cdot\nabla_x,
\end{equation}
since heuristically it exchanges the role of $v$ with that of $x-tv$, both of which are conserved along the evolution (i.e.\ commute with $\mathcal{T}$). Indeed, one can observe that if $(s,q,p)=\mathcal{I}(t,x,v)$,
\begin{equation*}\label{NewDerivatives}
\begin{split}
\partial_s=-s^{-2}\left(\partial_t+q\cdot\nabla_x\right)-p\cdot\nabla_v,\qquad \nabla_q=s^{-1}\nabla_x+\nabla_v,\qquad\nabla_p=-s\nabla_v,
\end{split}
\end{equation*}
and
\begin{equation*}
 \mathcal{T}(f\circ \mathcal{I})=-s^{-2}\mathcal{T}(f)\circ \mathcal{I}.
\end{equation*}
so that composition with $\mathcal{I}$ preserves the class of solutions of free streaming $\mathcal{T}f=0$. The transformation $\mathcal{I}$ is almost symplectic in the sense that $dq\wedge dp=-dx\wedge dv$, and in particular the total charge is preserved:
\begin{equation*}
\begin{split}
\iint  (f\circ\mathcal{I})^2dqdp=\iint f^2dxdv.
\end{split}
\end{equation*} 

\subsubsection*{Recasting Vlasov-Poisson}

Given a solution $\mu(t,x,v)$ of \eqref{VP}, we let $\gamma=\mu\circ \mathcal{I}$, so that
\begin{equation}\label{TransformationMuGamma}
\begin{split}
\gamma(s,q,p)&:=\mu(\frac{1}{s},\frac{q}{s},q-sp),\qquad\mu(t,x,v)=\gamma(\frac{1}{t},\frac{x}{t},x-tv).
\end{split}
\end{equation}
 The Vlasov-Poisson system involves a perturbation of free streaming \eqref{FreeStreaming} by a force field (in this paper, we stick to the plasma terminology and refer to it as the ``Electric field''):
 \begin{equation}\label{DefEField}
 \begin{split}
 E[\mu](t,x)&:=  \nabla_x \Delta^{-1}_x \int \mu(t,x,v)^2 dv = \frac{1}{4\pi}\iint\frac{x-y}{\vert x-y\vert^3}\cdot\mu^2(t,y,v)dvdy,
 \end{split}
 \end{equation}
which also transforms naturally:
\begin{equation*}
\begin{split}
E[\mu](t,tx)&=\frac{1}{t^{2}}E[\gamma](\frac{1}{t},x),
\end{split}
\end{equation*}
and we see that $\mu$ solves \eqref{VP} on $0\le T_\ast\le t\le T^\ast$ if and only if $\gamma$ satisfies for $0\le (T^\ast)^{-1}\le s\le (T_\ast)^{-1}$
\begin{equation}\label{NewVP}
\begin{split}
\left(\partial_s+p\cdot\nabla_q\right)\gamma+\lambda s^{-1}E[\gamma]\cdot\nabla_p\gamma=0.
\end{split}
\end{equation}

\bigskip
\section{The Force Field and the Continuity Equation}\label{SecE}
To prove both the modified scattering and wave operator theorems, we require general estimates on the electric field $E$ defined in \eqref{DefEField}. In Lemma \ref{LemControlEF}, we prove fix-time bounds on the operator $\gamma\mapsto E$ and in Lemma \ref{lem:Ediff} we obtain dynamic bounds for an electric field which satisfies a slight strengthening of the continuity equation \eqref{CE}, namely:
\begin{equation}\label{ContinuityEquation}
\partial_s\left\{\gamma^2\right\}+\hbox{div}_q\left\{p\gamma^2\right\}+\hbox{div}_p\{F\gamma^2\}=0
\end{equation}
for some force-field $F$.
 
\begin{lemma}\label{LemControlEF}
Let $\gamma = \gamma(q,p)$ be such that
$\gamma \in L^2_{q,p}$, $\langle p\rangle^{2} \gamma \in L^\infty_{q,p}$ and $\nabla_q\gamma \in L^\infty_{q,p}$ and $E=E[\gamma]$ defined by \eqref{DefEField}.
For all $A>0$ and $\kappa\in(0,1/3)$ we have
\begin{equation}\label{ControlEF}
\begin{split}
\Vert E\Vert_{L^\infty_q}&\lesssim A\left[\Vert\gamma\Vert_{L^2_{q,p}}^2+\Vert\gamma\Vert_{L^\infty_{q,p}}^2\right]+A^{-1}\Vert \vert p\vert^2\gamma\Vert_{L^\infty_{q,p}}^2,\\
\Vert \nabla_qE(s)\Vert_{L^\infty_q}&\lesssim A \Vert \gamma\Vert_{L^2_{q,p}}^2+ A^{-\frac{\kappa}{3}}\Vert |p|^2\gamma\Vert_{L^\infty_{q,p}}^2+ A^{\kappa - \frac{1}{3}}\Vert \gamma\Vert_{L^\infty_{q,p}}\Vert\nabla_q\gamma\Vert_{L^\infty_{q,p}}.
\end{split}
\end{equation}

\end{lemma}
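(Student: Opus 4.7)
I will prove both bounds by the standard method of decomposing the singular Coulomb kernel into near and far pieces, combined with pointwise bounds on $\gamma^2$ coming from the given $L^\infty$ and $|p|^2$-moment controls.

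\textbf{First estimate.} Starting from the explicit representation
$E(q) = \frac{1}{4\pi}\iint \frac{q-y}{|q-y|^3}\gamma^2(y,w)\,dw\,dy$,
I split the spatial integration at $|q-y| = A$ and the velocity integration at $|w| = R$ (with $R$ to be chosen). On the near-near region $\{|q-y|<A,\ |w|<R\}$ I use $\gamma^2\leq \|\gamma\|_{L^\infty}^2$ to obtain a contribution of order $AR^3\|\gamma\|_{L^\infty}^2$. On the near-far region $\{|q-y|<A,\ |w|>R\}$ I use $\gamma^2 \leq \||w|^2\gamma\|_{L^\infty}^2/|w|^4$ and integrability to get $AR^{-1}\||w|^2\gamma\|_{L^\infty}^2$. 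For the far spatial region $|q-y|>A$ I use the crude bound $|q-y|^{-2} \leq A^{-2}$ together with $\|\gamma\|_{L^2_{q,p}}^2$. Choosing $R$ (and possibly a different spatial cutoff for the high-$w$ piece) balances the exponents of $A$ to yield the stated form of the bound.

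\textbf{Second estimate.} I would use that $\nabla_q E = R_i R_j \rho - \tfrac{1}{3}\rho\,\mathbf{I}$ where $\rho = \int \gamma^2 \, dw$. Concretely, writing $\nabla_q E(q) = \mathrm{pv}\int K^{(1)}(q-y)\rho(y)\,dy + c\rho(q)$ with $K^{(1)}(z) = \nabla(z/|z|^3)$ having mean zero on spheres, I split at $|q-y|=A$. The far part $\int_{|q-y|>A} K^{(1)} \rho\,dy$ is bounded by $A^{-3}\|\rho\|_{L^1}=A^{-3}\|\gamma\|_{L^2}^2 \leq A\|\gamma\|_{L^2}^2$ (for $A$ in the relevant range, or adjusted by Cauchy--Schwarz). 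The PV near part uses the identity
\[\int_{|q-y|<A}K^{(1)}(q-y)(\rho(y)-\rho(q))\,dy\]
and requires a H\"older-type modulus of continuity for $\rho$. To obtain this I split $\rho = \rho_{<R} + \rho_{>R}$ according to $|w|\lessgtr R$: the low-$w$ piece $\rho_{<R}$ is Lipschitz with constant $\lesssim R^3\|\gamma\|_{L^\infty}\|\nabla_q\gamma\|_{L^\infty}$ (from the chain rule and the bounded $w$-support), while the high-$w$ piece has $\|\rho_{>R}\|_{L^\infty}\lesssim R^{-1}\||w|^2\gamma\|_{L^\infty}^2$. Interpolating these two bounds via $\min(M_1 d, M_2) \leq M_1^\alpha M_2^{1-\alpha}d^\alpha$ gives Hölder continuity for $\rho$ with any exponent $\alpha\in(0,1)$, and integrating against $|q-y|^{-3}$ on $|q-y|<A$ produces a factor of $A^\alpha$ times a mixed product $(\|\gamma\|_{L^\infty}\|\nabla_q\gamma\|_{L^\infty})^{\alpha}(\||w|^2\gamma\|_{L^\infty}^2)^{1-\alpha}$. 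A weighted AM--GM converts this mixed product into the sum $A^{\kappa - 1/3}\|\gamma\|_{L^\infty}\|\nabla_q\gamma\|_{L^\infty}+A^{-\kappa/3}\||w|^2\gamma\|_{L^\infty}^2$, with the parameters $\alpha$, $R$ and the AM--GM scale chosen in terms of $\kappa$ (the restriction $\kappa<1/3$ ensures admissibility of these choices).

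\textbf{Main obstacle.} The essential technical difficulty is that $\nabla_q\rho$ is not controlled by the given norms: $\int|\gamma|\,dw$ diverges at infinity for $\gamma$ with only the $|w|^{-2}$-decay supplied by $\||w|^2\gamma\|_{L^\infty}$. Hence one cannot simply put the derivative on $\rho$ and use a Lipschitz bound; the Hölder/interpolation argument sketched above is designed precisely to circumvent this, and explains why the gradient estimate is weaker (with fractional powers of $A$) than the bound on $E$ itself.
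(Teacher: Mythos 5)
Your route is genuinely different from the paper's. The paper avoids singular integral theory entirely: it expands the Coulomb kernel as a continuous superposition of smooth bumps at scales $R$ (and, in the velocity variable, at scales $V$), so that at each dyadic scale one has two \emph{bounded} competing estimates for $\partial_q E^j_{R,V}$ — one from differentiating the kernel (costing $R^{-1}$ and the moment $\||p|^2\gamma\|_{L^\infty}^2$), the other from integrating by parts onto $\gamma^2$ (costing $\|\gamma\|_{L^\infty}\|\nabla_q\gamma\|_{L^\infty}$) — and takes the better of the two; the threshold $V=R^{-\kappa}$ makes the double $R,V$ integral converge precisely when $\kappa\in(0,1/3)$. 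You instead invoke the Riesz/CZ representation $\nabla E = \mathrm{pv}\int K^{(1)}\rho + c\rho$, use mean-zero cancellation against a H\"older modulus of continuity of $\rho$, and then convert the resulting geometric mean into a sum by AM--GM. For the $\mathrm{pv}$ part this works, though a detail to fix: the modulus you actually obtain from optimizing over the velocity cutoff $R$ is H\"older-$\tfrac14$ with constant $(\|\gamma\|\|\nabla_q\gamma\|)^{1/4}(\||p|^2\gamma\|^2)^{3/4}$ (not ``any $\alpha\in(0,1)$''); the free $\kappa$ then comes from the AM--GM parameter, and the admissibility condition reproduces $\kappa\in(0,1/3)$ exactly.

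There is, however, a real gap: you write $\nabla_qE=\mathrm{pv}\int K^{(1)}\rho\,dy + c\rho(q)$ but never bound the Dirac/local term $c\rho(q)$, and it is not innocuous. It requires $\|\rho\|_{L^\infty_q}$, and the only straightforward moment bound is $\|\rho\|_{L^\infty}\lesssim R^3\|\gamma\|_{L^\infty}^2+R^{-1}\||p|^2\gamma\|^2$, which produces $\|\gamma\|_{L^\infty_{q,p}}^2$ — a quantity that does \emph{not} appear on the right-hand side of the second estimate in \eqref{ControlEF}. (Note that the first estimate does carry $\|\gamma\|_{L^\infty}^2$; the second deliberately does not, replacing it with the cross term $\|\gamma\|_{L^\infty}\|\nabla_q\gamma\|_{L^\infty}$.) One can rescue this with an additional averaging argument: comparing $\rho(q)$ to its average over a ball of radius $r$ gives
\begin{equation*}
\rho(q)\lesssim r\,R^3\|\gamma\|_{L^\infty}\|\nabla_q\gamma\|_{L^\infty}+r^{-3}\|\gamma\|_{L^2_{q,p}}^2+R^{-1}\||p|^2\gamma\|_{L^\infty}^2,
\end{equation*}
and choosing $r=A^{-1/3}$, $R=A^{\kappa/3}$ matches the RHS of \eqref{ControlEF}. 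This is a genuine extra step your write-up omits — and it is precisely the step the paper's smooth scale decomposition sidesteps, since there a derivative of the kernel is always a bounded bump and no local Dirac contribution ever appears.
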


In fact, we will mostly make use of the second line of \eqref{ControlEF} corresponding to the choice $A=\ip{\ln(s)}^4$, $\kappa=\frac{1}{30}$, i.e.\ the bound
\begin{equation}\label{ControlEF'}
 \Vert \nabla_qE(s)\Vert_{L^\infty_q}\lesssim \ip{\ln(s)}^4 \Vert \gamma\Vert_{L^2_{q,p}}^2+ \Vert |p|^2\gamma\Vert_{L^\infty_{q,p}}^2+ \ip{\ln(s)}^{-\frac{6}{5}}\Vert \gamma\Vert_{L^\infty_{q,p}}\Vert\nabla_q\gamma\Vert_{L^\infty_{q,p}}.
\end{equation}

\begin{remark}
In the estimates of this section, up to minor modifications one may alternatively work with the $\langle p\rangle^{-1}L^4_{q,p}$ norm of $\gamma$, rather than its $L^2_{q,p}$ norm. This allows to consider initial data with infinite mass -- see also Remark \ref{rem:intro} \eqref{it:inf_conslaws}.
\end{remark}

\begin{proof}[Proof of Lemma \ref{LemControlEF}]
We decompose the electric field on different scales using a radially symmetric function $\chi\in C^\infty_c(\{1/2\le \vert y\vert\le 2\})$ with $\int_{\mathbb{R}^3} \chi(y)dy =1$, namely
\begin{equation*}
\begin{split}
E^j[\gamma](q)=c\int_{R=0}^\infty  E^j_R(q)\frac{dR}{R^2},\qquad E^j_R[\gamma](q):=\iint R^{-1}\{\partial_{q^j}\chi\}(R^{-1}(q-r))\cdot\gamma^2(r,u)\, dr du,
\end{split}
\end{equation*}
and we directly obtain the following elementary bounds
\begin{equation}\label{ElementaryE}
\begin{split}
 E^j_R&\lesssim R^{-1}\Vert\gamma\Vert_{L^2_{q,p}}^2,\qquad \vert\partial_qE^j_R\vert\lesssim R^{-2}\Vert\gamma\Vert_{L^2_{q,p}}^2,
\end{split}
\end{equation}
which is enough for large $R$. To go further, we introduce
\begin{equation*}
\begin{split}
E^j_{R,V}[\gamma](q):=\iint  R^{-1}\{\partial_{q^j}\chi\}(R^{-1}(q-r))\cdot\chi(V^{-1}u)\cdot\gamma^2(r,u) \, dr du,
\end{split}
\end{equation*}
with $E^j[\gamma](q)=c\int_{R=0}^\infty\int_{V=0}^\infty  E^j_{R,V}(q)\frac{dV}{V}\frac{dR}{R^2}$ and we estimate
\begin{equation}\label{ElementaryE2}
\begin{split}
\vert E^j_{R,V}\vert&\lesssim R^2\min\{V^3\Vert \gamma\Vert_{L^\infty_{q,p}}^2,V^{-1}\Vert |p|^2\gamma\Vert_{L^\infty_{q,p}}^2\},\\
\vert\partial_qE^j_{R,V}\vert&\lesssim R\min\{V^{-1}\Vert |p|^2\gamma\Vert_{L^\infty_{q,p}}^{2},RV^3\Vert \nabla_q\gamma\Vert_{L^\infty_{q,p}}\Vert\gamma\Vert_{L^\infty_{q,p}}\}.
\end{split}
\end{equation}
From this we deduce that
\begin{equation*}
\begin{split}
\vert E^j[\gamma]\vert&\lesssim \int_{R=A}^\infty \vert E^j_R\vert\frac{dR}{R^2}+\int_{R=0}^A\int_{V=0}^{B}\vert E^j_{R,V}\vert\frac{dR}{R^2}\frac{dV}{V}+\int_{R=0}^A\int_{V=B}^{\infty}\vert E^j_{R,V}\vert\frac{dR}{R^2}\frac{dV}{V}\\
&\lesssim A^{-2}\Vert\gamma\Vert_{L^2_{q,p}}^2+AB^3\Vert \gamma\Vert_{L^\infty_{q,p}}^2+AB^{-1}\Vert \vert p\vert^2\gamma\Vert_{L^\infty_{q,p}}^2
\end{split}
\end{equation*}
and choosing $A=B^{-1}$, we obtain the first line of \eqref{ControlEF}. Similarly, we see that for $\kappa\in(0,1/3)$
\begin{equation*}
\begin{split}
\vert \partial_qE^j\vert&\lesssim \int_{R=A}^\infty \vert\partial_qE^j_R\vert\frac{dR}{R^2}+\int_{R=0}^A\int_{V=0}^{R^{-\kappa}}\vert\partial_{q}E^j_{R,V}\vert\frac{dV}{V} \frac{dR}{R^2}+\int_{R=0}^A\int_{V=R^{-\kappa}}^{\infty}\vert\partial_{q}E^j_{R,V}\vert \frac{dV}{V}\frac{dR}{R^2}\\
&\lesssim A^{-3}\Vert\gamma\Vert_{L^2_{q,p}}^2+A^\kappa \Vert |p|^2\gamma\Vert_{L^\infty_{q,p}}^2+A^{1-3\kappa}\Vert\gamma\Vert_{L^\infty_{q,p}}\Vert\nabla_q\gamma\Vert_{L^\infty_{q,p}}.
\end{split}
\end{equation*}
After substituting $A$ with $A^{-1/3}$, this gives the second line of \eqref{ControlEF}.
\end{proof}

\begin{lemma}\label{lem:Ediff} 
Fix $0 < s_0 < s_1$. Suppose $\gamma \in L^\infty_s([s_0,s_1]; L^2_{q,p})$ satisfies \eqref{ContinuityEquation} in the sense of distribution for some bounded force field $F$.

\noindent (i) we can find two variants of the Lipschitz norm (in time) of $E$:
\begin{equation}\label{eq:Ediff}
\begin{aligned}
\|E(s_1)-E(s_0)\|_{L^\infty_q} &\lesssim \langle \ln(s_1-s_0) \rangle^2 (s_1 - s_0)\Vert \vert p\vert^2\gamma\Vert_{L^\infty_{s,q,p}}^2+(t_1-t_0)^2\left[\Vert \gamma\Vert_{L^\infty_{s,q,p}}^2+\Vert \gamma\Vert_{L^\infty_sL^2_{q,p}}^2\right]\\
&\quad+(s_1-s_0)^3\langle\ln(s_1/s_0)\rangle\Vert \langle p\rangle^2\gamma\Vert_{L^\infty_{s,q,p}}^2\Vert sF\Vert_{L^\infty_{s,q,p}}
\end{aligned}
\end{equation}
and assuming only \eqref{CE}, we see that
\begin{equation}\label{eq:EdiffVar}
\begin{aligned}
\|E(s_1)-E(s_0)\|_{L^\infty_q} &\lesssim \langle \ln(s_1-s_0) \rangle (s_1 - s_0)\Vert {\bf j}\Vert_{L^\infty_{s,q}}+(s_1-s_0)^2\left[\Vert \langle p\rangle^2\gamma\Vert_{L^\infty_{s,q,p}}^2+\Vert \gamma\Vert_{L^\infty_sL^2_{q,p}}^2\right],
\end{aligned}
\end{equation}
\noindent (ii) we also have the corresponding estimate for $\nabla_q E$:
\begin{equation}\label{eq:EdiffVarGrad}
\begin{aligned}
\|\nabla E(s_1)-\nabla E(s_0)\|_{L^\infty_q} &\lesssim \langle \ln(s_1-s_0) \rangle (s_1 - s_0)\Vert \nabla_q{\bf j}\Vert_{L^\infty_{s,q}}\\
&\quad +(s_1-s_0)^2\left[\Vert \langle p\rangle^4\gamma\Vert_{L^\infty_{s,q,p}}^2+\Vert \nabla_q\gamma\Vert_{L^\infty_{s,q,p}}^2+\Vert \gamma\Vert_{L^\infty_sL^2_{q,p}}^2\right],
\end{aligned}
\end{equation}
from which we deduce
\begin{equation}\label{eq:gradEdiff}
\begin{aligned}
\|\nabla E(s_1)-\nabla E(s_0)\|_{L^\infty_q} &\lesssim \langle \ln(s_1-s_0) \rangle (s_1 - s_0)\left[\Vert \langle p\rangle^5\gamma\Vert_{L^\infty_{s,q,p}}^2+\Vert \nabla_q\gamma\Vert_{L^\infty_{s,q,p}}^2+\Vert \gamma\Vert_{L^\infty_sL^2_{q,p}}^2\right]
\end{aligned}
\end{equation}

\end{lemma}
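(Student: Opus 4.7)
The plan is to work scale by scale using the dyadic decomposition of $E$ from Lemma~\ref{LemControlEF}, trading the elementary bounds \eqref{ElementaryE}--\eqref{ElementaryE2} against time-integrated identities derived from \eqref{ContinuityEquation}. As a preliminary observation, integrating \eqref{ContinuityEquation} in $p$ kills the $\hbox{div}_p(F\gamma^2)$ term and yields the classical continuity equation $\partial_s\rho+\hbox{div}_q{\bf j}=0$ for $\rho=\int\gamma^2\,dp$, which justifies \eqref{CE} and will be the key input for \eqref{eq:EdiffVar}.

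For \eqref{eq:EdiffVar} I split the $R$-integral $E=c\int_0^\infty E_R\,dR/R^2$ into three regimes with cutoffs $0<B<A$ to be chosen. On the large scales $R\ge A$, the triangle inequality in $s$ combined with the first bound in \eqref{ElementaryE} gives a contribution of order $A^{-2}\|\gamma\|_{L^\infty_s L^2}^2$. On the small scales $R\le B$, the pointwise bound $\rho\lesssim\|\langle p\rangle^2\gamma\|_{L^\infty}^2$ yields $|E_R|\lesssim R^2\|\langle p\rangle^2\gamma\|_{L^\infty}^2$, and integration against $dR/R^2$ contributes $B\|\langle p\rangle^2\gamma\|_{L^\infty}^2$. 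On the intermediate range I differentiate $E_R$ in $s$ using \eqref{ContinuityEquation}: the $\hbox{div}_p$ term drops out since the kernel is $p$-independent, and integration by parts on $\hbox{div}_q(p\gamma^2)$ produces
\begin{equation*}
\partial_sE^j_R=-R^{-2}\int(\partial_k\partial_j\chi)(R^{-1}(q-r))\,{\bf j}^k(s,r)\,dr,\qquad |\partial_sE_R|\lesssim R\|{\bf j}\|_{L^\infty_{s,q}}.
\end{equation*}
Integrating in $s$ and then in $R$ with weight $dR/R^2$ gives a contribution of order $(s_1-s_0)\|{\bf j}\|_{L^\infty}\log(A/B)$. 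Balancing with $A=(s_1-s_0)^{-1}$ and $B=(s_1-s_0)^2$ produces the required $\langle\ln(s_1-s_0)\rangle$ factor and yields \eqref{eq:EdiffVar}.

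For \eqref{eq:Ediff}, I refine the scheme by employing the two-scale decomposition $E_{R,V}$ from \eqref{ElementaryE2}. The $p$-test function $\chi(V^{-1}u)$ no longer commutes with $\hbox{div}_p$, so integration by parts produces an additional $F$-contribution proportional to $V^{-1}\iint R^{-1}(\partial_j\chi)(R^{-1}(q-r))(\partial_k\chi)(V^{-1}u)\,F^k\gamma^2\,dr\,du$. Bounding $\|F\|_{L^\infty}\le s^{-1}\|sF\|_{L^\infty}$ and integrating $s^{-1}$ in $s$ yields the $\langle\ln(s_1/s_0)\rangle$ factor, while balancing the $(R,V)$-integrals over the regimes where $\|\gamma\|_{L^\infty}$ and $\||p|^2\gamma\|_{L^\infty}$ respectively dominate produces the two logarithms and the moment structure in \eqref{eq:Ediff}. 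For \eqref{eq:EdiffVarGrad} I repeat the three-regime decomposition on $\nabla_qE$: the medium-$R$ piece now involves $\nabla_q{\bf j}$ (after moving one derivative from the kernel onto ${\bf j}$), while the small-$R$ piece exploits $\int\nabla\partial\chi=0$ to Taylor-expand $\rho$ around $q$ and gain an extra factor of $R$, giving a bound involving $\|\nabla_q\rho\|_{L^\infty}\lesssim\|\nabla_q\gamma\|_{L^\infty}\|\langle p\rangle^4\gamma\|_{L^\infty}$. Finally, \eqref{eq:gradEdiff} follows by applying Cauchy--Schwarz and AM--GM to $\nabla_q{\bf j}=2\int p\gamma\,\nabla_q\gamma\,dp$ to bound $\|\nabla_q{\bf j}\|_{L^\infty}\lesssim\|\langle p\rangle^5\gamma\|_{L^\infty}^2+\|\nabla_q\gamma\|_{L^\infty}^2$ and substituting into \eqref{eq:EdiffVarGrad}.

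The main obstacle I anticipate is the careful selection of cutoffs in the three-scale (and, for \eqref{eq:Ediff}, four-regime) decomposition in order to produce the precise logarithmic factors: the extra log in \eqref{eq:Ediff} arises because the momentum cutoff forces a second layer of balancing, and the factor $\langle\ln(s_1/s_0)\rangle$ is produced by integrating $s^{-1}$ to convert $\|sF\|_\infty$ into the relevant time integral. Beyond this, one must verify that the direct pointwise bounds on $E_R$ and $\nabla_qE_R$ at the smallest and largest scales are compatible with the moment and derivative norms appearing in the final estimates, which requires some care when passing from $\nabla_q{\bf j}$ and $\nabla_q\rho$ back to $\gamma$-norms.
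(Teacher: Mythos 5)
Your treatment of \eqref{eq:EdiffVar}, \eqref{eq:EdiffVarGrad}, \eqref{eq:gradEdiff} essentially coincides with the paper's: the three-regime split in $R$, the integration by parts of $\operatorname{div}_q(u\gamma^2)$ onto the kernel to get $|\partial_s E_R|\lesssim R\Vert{\bf j}\Vert_{L^\infty}$, and the balancing $A\sim (s_1-s_0)^{-1}$, $B\sim(s_1-s_0)^2$ are all there. Your Taylor-expansion argument for the small-$R$ piece of $\nabla_q E$ (exploiting $\int\nabla\partial\chi=0$) is a legitimate alternative to the paper's simpler device of passing the $q$-derivative onto $\gamma^2$; both produce the bound in terms of $\Vert\nabla_q\gamma\Vert\Vert\langle p\rangle^4\gamma\Vert$. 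The final AM--GM step from $\nabla_q{\bf j}$ to \eqref{eq:gradEdiff} is correct.

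There is, however, a genuine gap in your treatment of \eqref{eq:Ediff}. You propose to take $\partial_s E_{R,V}$ at each \emph{individual} dyadic scale $(R,V)$, which produces an $F$-contribution of size $\sim V^{-1}R^2\Vert F\Vert\min\{V^3\Vert\gamma\Vert_{L^\infty}^2, V^{-1}\Vert|p|^2\gamma\Vert_{L^\infty}^2\}$. Summing this in $V$ over the intermediate range yields $\sim R^2\Vert\gamma\Vert_{L^\infty}\Vert|p|^2\gamma\Vert_{L^\infty}\Vert F\Vert$ (the integrand is not concentrated at the endpoints), and then integrating $\int_{A^2}^{A^{-1}} R^2\,dR/R^2\sim A^{-1}=(s_1-s_0)^{-1}$ gives the $F$-term a prefactor $(s_1-s_0)^{-1}$, rather than the $(s_1-s_0)^3$ claimed in \eqref{eq:Ediff}. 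This is not a cosmetic difference: the lemma is applied with $s_0,s_1\to 0$, so a negative power of $s_1-s_0$ is fatal. The crucial device you are missing is that the $V$-cutoffs must be telescoped into the single window function $\chi_{\{B\le\cdot\le B^{-3}\}}(u)=\int_{B}^{B^{-3}}\chi(V^{-1}u)\,dV/V$ \emph{before} taking the time derivative; then the integration by parts from $\operatorname{div}_u(F\gamma^2)$ lands on $\nabla_u\chi_{\{B\le\cdot\le B^{-3}\}}$, which is supported only at the two boundary scales $|u|\sim B$ (with size $B^{-1}$) and $|u|\sim B^{-3}$ (with size $B^3$). The interior $V$-scales therefore contribute nothing to the $F$-term, and one obtains the much smaller bound $\sim A^{-1}B^2\Vert\langle p\rangle^2\gamma\Vert_{L^\infty}^2\langle\ln(s_1/s_0)\rangle\Vert sF\Vert$, which with $B=A^2=(s_1-s_0)^2$ is $(s_1-s_0)^3$. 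Your "obstacle" paragraph worries only about the logarithmic factors, but it is the power of $(s_1-s_0)$ that the scale-by-scale approach cannot recover.
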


\begin{proof}

$(i)$ Using \eqref{ElementaryE} and \eqref{ElementaryE2} we see that for $s\in\{s_0,s_1\}$,
\begin{equation}\label{SmallERVar1}
\begin{split}
\int_{R=A^{-1}}^\infty \vert E_R(s)\vert \frac{dR}{R^2}&\lesssim A^{2}\Vert \gamma(s)\Vert_{L^2_{q,p}}^2,\qquad \int_{R=0}^{A^2} \vert E_{R}(s)\vert \frac{dR}{R^2}\lesssim A^2\Vert \langle p\rangle^2\gamma(s)\Vert_{L^\infty_{q,p}}^2,
\end{split}
\end{equation}
and
\begin{equation*}
\begin{split}
\int_{R=0}^{A^{-1}}\int_{V=0}^{B} \vert E_{R,V}(s)\vert \frac{dR}{R^2}\frac{dV}{V}&\lesssim A^{-1}B^3\Vert\gamma(s)\Vert_{L^\infty_{q,p}}^2,\\
\int_{R=0}^{A^{-1}}\int_{V=B^{-3}}^\infty \vert E_{R,V}(s)\vert \frac{dR}{R^2}\frac{dV}{V}&\lesssim A^{-1}B^3\Vert \vert p\vert^2\gamma(s)\Vert_{L^\infty_{q,p}}^2,
\end{split}
\end{equation*}
and we conclude that
\begin{equation*}
\begin{split}
\vert E(s)-\int_{R=A^2}^{A^{-1}}E_{R,a}(s)\frac{dR}{R^2}\vert&\le A^{-1}B^3\Vert \langle p\rangle^2\gamma\Vert_{L^\infty_{q,p}}^2+A^2\left[\Vert \gamma\Vert_{L^2_{q,p}}^2+\Vert \langle p\rangle^2 \gamma\Vert_{L^\infty_{q,p}}^2\right],\\
E_{R,a}&:=\iint R^{-1}\{\partial_{q^j}\chi\}(R^{-1}(q-r))\cdot\chi_{\{B\le\cdot\le B^{-3}\}}(u)\cdot\gamma^2(r,u)\, dr du,
\end{split}
\end{equation*}
where
\begin{equation*}
\chi_{\{B\le\cdot\le B^{-3}\}}(u)=\int_{\{B\le V\le B^{-3}\}}\chi(V^{-1}u)\frac{dV}{V}.
\end{equation*}
On the other hand, using the equation \eqref{ContinuityEquation} we find that
\begin{equation}\label{UsingContinuityEquation}
\begin{split}
 0&=\int_{s=s_0}^{s_1}\iint  R^{-1}\{\partial_{q^j}\chi\}(R^{-1}(q-r))\cdot\chi_{\{B\le\cdot\le B^{-3}\}}(u)\cdot \left\{\partial_s\gamma^2+\hbox{div}_{r}(\gamma^2u)+\hbox{div}_u(F\gamma^2)\right\} dr du ds,\\
 &=E_{R,a}(s_1)-E_{R,a}(s_0)+\int_{s=s_0}^{s_1}\iint R^{-2}u^k\{\partial_{q^j}\partial_{q^k}\chi\}(R^{-1}(q-r))\cdot\chi_{\{B\le\cdot\le B^{-3}\}}(u)\cdot\gamma^2(s,r,u)\, dr du ds,\\
&\qquad-\int_{s=s_0}^{s_1}\iint R^{-1}\partial_{q^j}\chi(R^{-1}(q-r))\cdot\gamma^2(s,r,u) \cdot (F\cdot\nabla_u)\chi_{\{B\le\cdot\le B^{-3}\}}(u)\, dr du ds.
\end{split}
\end{equation}
Since
\begin{equation*}
\begin{split}
\left\vert\nabla_u \chi_{\{B\le\cdot\le B^{-3}\}}(u)\right\vert&\lesssim B^{-1}\mathfrak{1}_{\{\vert u\vert\le 2B\}}+B^{3}\mathfrak{1}_{\{\vert u\vert\ge B^{-3}/2\}},
\end{split}
\end{equation*}
we see that
\begin{equation*}
\begin{split}
&\left\vert \iint R^{-1}\partial_{q^j}\chi(R^{-1}(q-r))\cdot\gamma^2(r,u) \cdot (F\cdot\nabla_u)\chi_{\{B\le\cdot\le B^{-3}\}}(u) dr du\right\vert\\
\lesssim &\Vert s F\Vert_{L^\infty_{q,p}}\cdot s^{-1}R^2\cdot\left[B^{2}\Vert \gamma\Vert_{L^\infty_{q,p}}^2+B^6\Vert \vert p\vert^2\gamma\Vert_{L^\infty_{q,p}}^2\right]
\end{split}
\end{equation*}
and using a crude bound for the second integral in \eqref{UsingContinuityEquation}, we find that
\begin{equation*}
\begin{split}
\vert \int_{R=A^2}^{A^{-1}}\left\{E_{R,a}(s_1)-E_{R,a}(s_0)\right\}\frac{dR}{R^2}\vert&\lesssim (s_1-s_0)\cdot \Vert \vert u\vert^2\gamma\Vert_{L^\infty_{s,r,u}}^2\cdot\int_{R=A^2}^{A^{-1}}\frac{dR}{R}\cdot \int_{V=B}^{B^{-3}}\frac{dV}{V}\\
&\quad+\langle\ln(s_1/s_0)\rangle\cdot \Vert sF\Vert_{L^\infty_{s,q,p}}\cdot A^{-1}B^{2}\Vert \langle p\rangle^2\gamma\Vert_{L^\infty_{s,q,p}}^2.
\end{split}
\end{equation*}
Letting $B=A^2=(s_1-s_0)^2$, we obtain the result. For the variant \eqref{eq:EdiffVar}, we do not localize in $u$. In this case, we need only use \eqref{SmallERVar1} and the last term in \eqref{UsingContinuityEquation} simplifies. We detail this in the similar $(ii)$ below.

$(ii)$ We use similar analysis without localizing in $u$. Passing the derivative onto $\gamma$ gives
\begin{equation*}
\begin{split}
\int_{R=A^{-\frac{2}{3}}}^\infty \vert \nabla E_R(s)\vert \frac{dR}{R^2}&\lesssim A^{2}\Vert \gamma(s)\Vert_{L^2_{q,p}}^2,\qquad \int_{R=0}^{A^2} \vert \nabla E_{R}(s)\vert \frac{dR}{R^2}\lesssim A^2\Vert\nabla_q\gamma(s)\Vert_{L^\infty_{q,p}}\cdot \Vert \langle p\rangle^4\gamma(s)\Vert_{L^\infty_{q,p}},\\
\end{split}
\end{equation*}
and the continuity equation \eqref{ContinuityEquation} gives
\begin{equation*}
\begin{split}
 0&=\int_{s=s_0}^{s_1}\iint  R^{-1}\{\partial_{q^j}\chi\}(R^{-1}(q-r))\cdot \partial_j\left\{\partial_s\gamma^2+\hbox{div}_{r}(\gamma^2u)\right\}\, dr du ds\\
 &=\partial_jE_{R}(s_1)-\partial_j E_{R}(s_0)+2\int_{s=s_0}^{s_1}\iint R^{-2}u^k\{\partial_{q^j}\partial_{q^k}\chi\}(R^{-1}(q-r))\cdot\gamma\cdot\nabla_q\gamma (r,u)\, dr du ds,
\end{split}
\end{equation*}
from which we deduce that
\begin{equation*}
\begin{split}
\Vert \nabla E_R(s_1)-\nabla E_R(s_0)\Vert_{L^\infty_{q,p}}&\lesssim (s_1-s_0)\cdot R\cdot \Vert \langle p\rangle^5\gamma\Vert_{L^\infty_{s,q,p}}\Vert\nabla_q\gamma\Vert_{L^\infty_{s,q,p}}
\end{split}
\end{equation*}
and integrating in $A^2\le R\le A^{-1}$, we obtain \eqref{eq:gradEdiff}.
\end{proof}

Finally we collect the modifications of Lemma \ref{LemControlEF} and \ref{lem:Ediff} above needed to consider smoother solutions. The proofs are similar (passing the derivative through the density) and are omitted.
\begin{lemma}\label{ControlEPropReg}
There holds that for all $\kappa \in (0,\frac{1}{3})$,
\begin{equation*}
\begin{split}
\Vert \nabla^2_qE\Vert_{L^\infty_q}&\lesssim  A\|\gamma\|_{L^2_{q,p}}^2+A^{-\frac{\kappa}{4}}\||p|^4 \gamma\|_{L^\infty_{q,p}} \|\nabla_q \gamma\|_{L^\infty_{q,p}}+  A^{\frac{3\kappa - 1}{4}}\|\gamma\|_{L^\infty_{q,p}}\|\nabla^2_q \gamma\|_{L^\infty_{q,p}}
\end{split}
\end{equation*}
and
\begin{equation*}
\begin{split}
\Vert \nabla^2_qE(s_1)-\nabla_q^2E(s_0)\Vert_{L^\infty_q}&\lesssim \langle \ln(s_1 - s_0)\rangle (s_1 - s_0)\Vert \nabla_{q}^2{\bf j}\Vert_{L^\infty_{s,q}}\\
&\quad+ (s_1 - s_0)^2\left[\|\gamma\|_{L^\infty_s L^2_{q,p}}^2+\Vert 
\langle p\rangle^5\gamma\Vert_{L^\infty_{s,q,p}}\Vert\nabla^2_{q}\gamma\Vert_{L^\infty_{s,q,p}}+\Vert \langle p\rangle^{2.1}\nabla_q\gamma\Vert_{L^\infty_{s,q,p}}^2\right].\\
\end{split}
\end{equation*}
\end{lemma}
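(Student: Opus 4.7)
The plan is to mirror the multi-scale decomposition used in the proofs of Lemmas \ref{LemControlEF} and \ref{lem:Ediff}$(ii)$, adapted to absorb the extra $q$-derivative. The guiding idea is that each additional derivative can either be placed on the smooth kernel $\partial_{q^j}\chi\bigl(R^{-1}(q-r)\bigr)$ (costing a factor $R^{-1}$) or, via integration by parts, onto $\gamma$; the latter option exchanges singular behaviour in $R$ for higher regularity norms of $\gamma$ and is used at small scales, while the former is used at large scales where the negative powers of $R$ are absorbed by the $L^2$ integrability of $\gamma$.

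For the first estimate, I would write $E^j = c\int_0^\infty E^j_R\,dR/R^2$ as in Lemma \ref{LemControlEF} and further localize small-scale pieces with a velocity cutoff $\chi(V^{-1}u)$. At large scales $R\gtrsim A^{-1/4}$, placing both derivatives on the kernel and using the $L^2$ bound gives $|\nabla_q^2 E^j_R|\lesssim R^{-3}\|\gamma\|_{L^2}^2$, which integrates to the $A\|\gamma\|_{L^2}^2$ contribution. At small scales, I split according to whether $V$ lies above or below a threshold of the form $R^{-\kappa'}$: when $V$ is large, pass one derivative onto $\gamma$ and use the $\langle p\rangle^4$-weight on the $u$-integral to produce a bound of the form $RV^{-5}\||p|^4\gamma\|_{\infty}\|\nabla_q\gamma\|_{\infty}$; when $V$ is small, pass both derivatives onto $\gamma$ to obtain $R^2V^3\|\gamma\|_{\infty}\|\nabla_q^2\gamma\|_{\infty}$. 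Integrating and optimising the threshold in terms of $\kappa$ yields the exponents $-\kappa/4$ and $(3\kappa-1)/4$.

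For the time-difference bound, I would follow the derivation of \eqref{eq:EdiffVarGrad} with one additional derivative throughout. Inserting the doubly differentiated kernel into the continuity equation \eqref{ContinuityEquation} and integrating in $s\in[s_0,s_1]$, the time derivative produces the desired difference $\nabla_q^2E^j_R(s_1)-\nabla_q^2E^j_R(s_0)$, while the contribution from $\mathrm{div}_u(F\gamma^2)$ vanishes upon $u$-integration since the kernel is $u$-independent. The current term $\mathrm{div}_r(u\gamma^2)$ is treated in two complementary ways: integrating the $r$-derivative back onto the kernel gives $\partial_q^3 K_R \ast \mathbf{j}$ and yields the $\langle\ln(s_1-s_0)\rangle(s_1-s_0)\|\nabla_q^2\mathbf{j}\|_\infty$ term via a Calder\'on-Zygmund-type estimate at the appropriate intermediate scale, while keeping the derivative on $\gamma$ produces $\partial_q^2K_R\cdot u\gamma\nabla_q\gamma$, which after Cauchy-Schwarz in $u$ with weights $\langle p\rangle^5$ or $\langle p\rangle^{2.1}$ yields the remaining two nonlinear terms in the square bracket; the $\|\gamma\|_{L^2}^2$ piece comes from the large-$R$ tail.

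The main technical delicacy I anticipate is the bookkeeping to balance exponents across scales, in particular verifying that the interpolated inequalities close at the claimed weights $\langle p\rangle^5$ and $\langle p\rangle^{2.1}$ — the latter reflects the requirement that $\int|p|\langle p\rangle^{-4.2}dp$ be convergent at infinity in three dimensions, with margin $0.1$ chosen for simplicity — and that the logarithmic factors in $s_1-s_0$ combine correctly after the $R$-integration. Beyond this, the argument is essentially mechanical following the templates of Lemmas \ref{LemControlEF} and \ref{lem:Ediff}.
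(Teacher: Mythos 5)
Since the paper omits the proof of Lemma~\ref{ControlEPropReg} and merely indicates that it follows by ``passing the derivative through the density'' as in Lemmas~\ref{LemControlEF} and~\ref{lem:Ediff}, your proposal is essentially a blind reconstruction of the intended argument, and its overall structure is correct: a dyadic decomposition in $R$ (and, at small scales, an additional velocity cutoff in $V$), distribution of the two $q$-derivatives between kernel and density, and for the time-difference bound, an application of the continuity equation~\eqref{ContinuityEquation} with the intermediate dyadic range producing the logarithmic factor, the $\mathrm{div}_u(F\gamma^2)$ term vanishing by $u$-independence of the kernel, and the small- and large-$R$ tails giving the $(s_1-s_0)^2$ contributions. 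Your reading of the $\langle p\rangle^{2.1}$ weight as ensuring $p$-integrability in three dimensions is also on target.

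That said, two points need attention. First, the small-scale elementary bound you write in the large-$V$ regime, $RV^{-5}\||p|^4\gamma\|_\infty\|\nabla_q\gamma\|_\infty$, is off. With one derivative placed on the kernel ($R^{-2}\partial^2\chi$, the $r$-integral contributing $R^3$) and the remaining derivative passed to $\gamma^2$, the $u$-integral over $|u|\sim V$ with weight $|p|^4$ scales as $V^{-4}\cdot V^3 = V^{-1}$, giving $RV^{-1}\||p|^4\gamma\|_\infty\|\nabla_q\gamma\|_\infty$. This is not cosmetic: inserting $V^{-1}$ and the threshold $V\sim R^{-\kappa}$ yields $\int_{R^{-\kappa}}^\infty V^{-1}\,\tfrac{dV}{V}\sim R^\kappa$ and then $\int_0^{A^{-1/4}} R^{\kappa-1}\,dR\sim A^{-\kappa/4}$, which is the claimed exponent, whereas $V^{-5}$ would give $A^{-5\kappa/4}$. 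The analogous calculation with $R^2V^3$ in the small-$V$ regime gives $A^{(3\kappa-1)/4}$, and the large-$R$ threshold $R\gtrsim A^{-1/4}$ is set so that $\int_{A^{-1/4}}^\infty R^{-3}\,\tfrac{dR}{R^2}\sim A$. Second, for the log-producing term you should be explicit that after inserting $\partial_q^2 K_R$ into the continuity equation and integrating by parts, the useful arrangement keeps exactly one derivative on the kernel (the one arising from $\mathrm{div}_q$) and places the other two on $\mathbf j$: this gives $|\partial_qK_R*\nabla_q^2\mathbf j|\lesssim R\|\nabla_q^2\mathbf j\|_\infty$, so $\int_{A^2}^{A^{-1/2}} R\,\tfrac{dR}{R^2}\sim\langle\ln A\rangle$ with $A\sim s_1-s_0$; the appeal to a ``Calder\'on--Zygmund-type estimate'' is vague and obscures that the logarithm is just the $dR/R$ integral over the intermediate dyadic range.
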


\bigskip
\section{Modified Scattering}\label{sec:mod_scat}
While we only need to study \eqref{NewVP} on a compact time interval, this equation is now time dependent with a violent singularity at $s=0$. This can be mitigated since the singular terms
\begin{equation*}
\left(\partial_s+\lambda s^{-1}E(s,q)\cdot\nabla_p\right)\gamma=l.o.t.
\end{equation*}
can be integrated to main order:
\begin{equation}\label{DefGAMMA}
\begin{split}
 \Gamma(s,q,p)=\gamma(s,q,p+\lambda\int_{s^\prime=1}^sE(s^\prime,q)\frac{ds^\prime}{s^\prime}),\qquad \gamma(s,q,p)=\Gamma(s,q,p-\lambda\int_{s^\prime=1}^sE(s^\prime,q)\frac{ds^\prime}{s^\prime}).
\end{split}
\end{equation}
Since $\Gamma$ satisfies an equivalent but more cumbersome equation, we prefer to work with \eqref{NewVP} to bootstrap control of the norms, but a variant of \eqref{DefGAMMA} leads quickly to the modified dynamics \eqref{eq:nu-conv} once $E$ is shown to converge.

The main result of this section is the following statement about modified scattering:
\begin{theorem}\label{ModScatThm}
There exists $\varepsilon>0$ such that if $\gamma_1(q,p)$ satisfies
\begin{equation}\label{MSAssID}
 \Vert \gamma_1\Vert_{L^2_{q,p}}+\Vert \langle p\rangle^{2}\gamma_1\Vert_{L^\infty_{q,p}}+\Vert \nabla_{p,q}\gamma_1\Vert_{L^\infty_{q,p}}\le\varepsilon_0\le\varepsilon,
\end{equation}
then there exists a unique solution $\gamma$ of \eqref{NewVP} with ``initial'' data $\gamma(s=1)=\gamma_1$ for all times $0<s\leq 1$, and $\gamma\in L^\infty_s((0,1],L^\infty_{q,p}\cap L^2_{q,p})$ satisfies
\begin{equation}
 \norm{\ip{p}^2\gamma(s)}_{L^\infty_{q,p}}\lesssim \varepsilon_0\ip{\ln(s)}^2,\qquad\norm{\nabla_{p,q}\gamma(s)}_{L^\infty_{q,p}}\lesssim \varepsilon_0\ip{\ln(s)}^5.
\end{equation}
If in addition
\begin{equation}\label{eq:modscat_assump2}
 \norm{\ip{p}\nabla_{p,q}\gamma_1}_{L^\infty_{q,p}}\leq \eps_0,
\end{equation}
then $\norm{\ip{p}\nabla_{p,q}\gamma(s)}_{L^\infty_{q,p}}\lesssim \varepsilon_0\ip{\ln(s)}^6$ and there exist $E_0=E[\gamma_0]\in L^\infty_{q,p}$ and $\gamma_0\in L^\infty_{q,p}$ such that, uniformly in $q,p$,
\begin{equation}\label{eq:nu-conv}
 \gamma(s,q+ps+\lambda s\ln(s)E_0(q),p+\lambda\ln(s)E_0(q))\to\gamma_0(q,p),\qquad s\to 0.
\end{equation}
\end{theorem}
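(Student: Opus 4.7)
The plan is to carry out a continuity/bootstrap argument in $s$, starting at $s = 1$ and running down toward $s = 0$. Local existence and uniqueness in a neighborhood of $s = 1$ follow from classical Vlasov-Poisson theory pulled back via the inversion $\I$, and $\|\gamma(s)\|_{L^2_{q,p}}$ is conserved exactly because the vector field in \eqref{NewVP} is divergence-free. The central idea is to bootstrap not $\gamma$ itself but the conjugate $\Gamma(s,q,p) := \gamma(s,q,p + K(s,q))$ from \eqref{DefGAMMA}, where $K(s,q) := \lambda \int_1^s E(s',q)\,ds'/s'$; this absorbs the singular factor $\lambda s^{-1}E \cdot \nabla_p$ and yields
\begin{equation*}
\partial_s \Gamma + p\cdot\nabla_q\Gamma - \bigl((p\cdot\nabla_q)K(s,q)\bigr)\cdot\nabla_p\Gamma = 0.
\end{equation*}
Fixing a large constant $C_1$, we assume on a maximal subinterval $(s_\ast, 1]$ of existence that
\begin{equation*}
\|\Gamma(s)\|_{L^2_{q,p}} + \|\langle p\rangle^2\Gamma(s)\|_{L^\infty_{q,p}} + \|\nabla_{p,q}\Gamma(s)\|_{L^\infty_{q,p}} \leq C_1\eps_0,
\end{equation*}
and aim to upgrade this to $\tfrac{1}{2}C_1\eps_0$, which by continuity forces $s_\ast = 0$.

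The crucial observation is that the density $\rho = \int \gamma^2\,dp = \int \Gamma^2\,dp$ is invariant under the $q$-dependent momentum shift, so $E$ and $\nabla_q E$ depend only on $\Gamma$. Applying Lemma \ref{LemControlEF} with $\Gamma$ in place of $\gamma$ therefore gives $\|E(s)\|_{L^\infty} + \|\nabla_q E(s)\|_{L^\infty} \lesssim \eps_0^2$ uniformly in $s \in (0,1]$, with no logarithmic loss; integrating in $ds'/s'$ yields $|K(s,q)| + |\nabla_q K(s,q)| \lesssim \eps_0^2 \langle\ln s\rangle$. Along the characteristics $\dot q = p$, $\dot p_i = -p_j\,\partial_{q_j}K_i(s,q)$ of the $\Gamma$-equation, $\Gamma$ is transported. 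Since the elementary identity $\int_0^1 \langle\ln s\rangle^N\, ds = O(N!) < \infty$ renders $\nabla_q K$ absolutely integrable on $(0,1]$, Gronwall's inequality gives $|p(s)| \sim |p(1)|$ uniformly, whence $\|\langle p\rangle^2 \Gamma(s)\|_\infty \lesssim \|\langle p\rangle^2 \gamma_1\|_\infty \leq \eps_0$. A similar Gronwall argument for the transport equation satisfied by $\nabla_{p,q}\Gamma$, whose source term $(p\cdot\nabla_q^2 K)\cdot\nabla_p\Gamma$ is controlled via Lemma \ref{ControlEPropReg} and \eqref{eq:EdiffVarGrad}, closes the bootstrap for $\|\nabla_{p,q}\Gamma(s)\|_\infty$. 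The stated bounds on $\gamma$ follow by unwinding $\gamma(s,q,p) = \Gamma(s,q,p-K(s,q))$: from $\|\langle p\rangle^2 \gamma\|_\infty \lesssim \|\langle p\rangle^2 \Gamma\|_\infty + \|K\|_\infty^2 \|\Gamma\|_\infty \lesssim \eps_0\langle\ln s\rangle^2$ one reads off the $\ln^2$ rate, and the chain-rule identity $\nabla_q\gamma = \nabla_q\Gamma - \nabla_q K\cdot\nabla_p\Gamma$ produces the stated polylogarithmic rates for the derivatives.

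With global-in-$s$ existence and the above bounds in hand, Lemma \ref{lem:Ediff} applied via the continuity equation \eqref{CE} shows $\{E(s)\}_{s>0}$ is Cauchy in $L^\infty_q$ as $s \to 0$, producing a limit $E_0 \in L^\infty$. This in turn yields the decomposition $K(s,q) = \lambda E_0(q)\ln s + K_0(q) + o_{s\to 0}(1)$ with $K_0(q) := -\lambda \int_0^1 (E(s',q) - E_0(q))/s'\,ds'$ well-defined by the Cauchy estimate. Consequently $\Gamma(s, q, p + K_0(q))$ converges pointwise to some $\gamma_0 \in L^\infty$, and the density invariance forces $E_0 = E[\gamma_0]$. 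Expanding the modified characteristics as $q(s) = q_0 + p_0 s + \lambda E_0(q_0) s\ln s + o(s)$ and $p(s) = p_0 + \lambda E_0(q_0) \ln s + o(1)$ and using that $\gamma$ is transported along them then yields \eqref{eq:nu-conv}.

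The principal obstacle is closing the bootstrap for the derivatives of $\Gamma$: the drift coefficient $p\cdot\nabla_q K$ is linear in $|p|$, so differentiating the equation produces sources featuring $p\cdot\nabla_q^2 K$, which Lemma \ref{ControlEPropReg} only controls at the cost of higher moments and derivatives of $\gamma$. The resolution is to work with weighted norms that trade one power of $\langle p\rangle$ for derivative regularity (in the spirit of the wave operator argument), exploit the smallness $\eps_0^2$ coming from $\|\nabla^2_q E\|_\infty$, and rely once more on the finiteness of $\int_0^1\langle\ln s\rangle^N ds$ to Gronwall the whole expression into a uniform bound.
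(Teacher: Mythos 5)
Your approach diverges significantly from the paper's and, as you yourself flag at the end, it has a genuine derivative-count problem that the paper's proof is specifically designed to avoid.

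Two concrete issues:

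\paragraph{1. The $\Gamma$-equation is misstated.} Differentiating $\Gamma(s,q,p)=\gamma(s,q,p+K(s,q))$ against \eqref{NewVP} gives
\begin{equation*}
\partial_s\Gamma + (p+K)\cdot\nabla_q\Gamma - \bigl((p+K)\cdot\nabla_q K\bigr)\cdot\nabla_p\Gamma = 0,
\end{equation*}
not $\partial_s\Gamma + p\cdot\nabla_q\Gamma - (p\cdot\nabla_q K)\cdot\nabla_p\Gamma=0$. The omitted terms carry a factor $K=O(\eps_0^2\langle\ln s\rangle)$ and are lower order, but they must be tracked.

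\paragraph{2. The derivative loss is fatal under the stated hypotheses, and the proposed fix does not address it.} Your observation that $E[\gamma]=E[\Gamma]$ (because $\int\gamma^2dp=\int\Gamma^2dp$) is correct and lets you apply Lemma~\ref{LemControlEF} with $\Gamma$, giving $\|E\|_\infty+\|\nabla_qE\|_\infty\lesssim\eps_0^2$ \emph{provided the bootstrap for $\nabla_{q}\Gamma$ closes}. But differentiating the $\Gamma$-equation in $q$ produces the source $(p+K)\cdot\nabla_q^2K\cdot\nabla_p\Gamma$, and $\nabla_q^2K=\lambda\int_1^s\nabla_q^2E\,ds'/s'$ requires pointwise control of $\nabla_q^2E$. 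By Lemma~\ref{ControlEPropReg} this requires $\nabla_q^2\gamma$ and $|p|^4\gamma$ in $L^\infty$ — neither is assumed in \eqref{MSAssID}, which only gives $\nabla_{p,q}\gamma_1$ and $\langle p\rangle^2\gamma_1$. This is precisely the derivative loss you name as ``the principal obstacle,'' but the proposed resolution (weighted $\theta$-norms, the smallness of $\eps_0^2$, Gronwall with $\int_0^1\langle\ln s\rangle^N ds<\infty$) does not repair it: those tools shift log-weights and $p$-moments around, but they cannot manufacture the missing second derivative of $\gamma$ needed merely to make sense of $\|\nabla_q^2E\|_\infty$. The paper avoids this entirely by bootstrapping $\gamma$ directly: by the commutation relations \eqref{CommRel} the source for $\nabla_q\gamma$ is $-\lambda s^{-1}\nabla_qE\cdot\nabla_p\gamma$, which involves only $\nabla_qE$ (one derivative of $E$, controlled by one derivative of $\gamma$ via \eqref{ControlEF'}). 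The price is the $s^{-1}$ factor, which the paper handles by choosing $A=\langle\ln s\rangle^4$, $\kappa=1/30$ in \eqref{ControlEF'} so that the $\nabla_q\gamma$-term carries a factor $\langle\ln s\rangle^{-6/5}$ making the Gr\"onwall integral $\int_s^1\langle\ln s'\rangle^{-6/5}\,ds'/s'$ finite; this yields $\|\nabla_q\gamma(s)\|_\infty\lesssim\eps_0\langle\ln s\rangle^5$ as stated. In short: the change of variables to $\Gamma$ trades the $s^{-1}$ singularity for one extra derivative of $E$, and with the one-derivative hypothesis of the theorem that trade cannot be afforded. (A variant of your idea \emph{is} used in the paper, but only \emph{after} the bootstrap is closed, to extract the limit $\gamma_0$, and the paper is explicit — just after \eqref{DefGAMMA} — that it works with \eqref{NewVP} rather than the $\Gamma$-equation precisely because the latter is ``more cumbersome.'')

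The rest of your outline (conservation of $L^2$, convergence of $E(s)$ to $E_0$ via the continuity-equation estimates of Lemma~\ref{lem:Ediff}, and the final identification of $\gamma_0$) is sound and essentially parallels Corollary~\ref{cor:Econv} and the proof of Theorem~\ref{ModScatThm}, but it all hinges on having closed the bootstrap, which your route does not do.
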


\begin{remark}\label{RemarkModScatThm}
We comment on some points of interest:
\begin{enumerate}
 \item In fact, as we will show below one can obtain global solutions in a bootstrap argument involving only the moments $\ip{p}^2\gamma$. The higher regularity of \eqref{MSAssID} is only used to make sense of the equations in a stronger sense. 
 
 \item The assumption \eqref{eq:modscat_assump2} is used to guarantee the convergence \eqref{eq:nu-conv}. We note that this statement is slightly different from the one in Theorem \ref{thm:main_simple}, in that in \eqref{eq:modscat_assump2} we start with \emph{uniform} control of one additional moment in $p$ on the gradients and obtain \emph{uniform} (rather than local) convergence in \eqref{eq:nu-conv}. The proofs are easily adapted to establish the corresponding local statement under local assumptions as in Theorem \ref{thm:main_simple}.
 \item Our proof of Theorem \ref{ModScatThm} shows that control of higher moments (in both $p$ and $q$) as well as higher regularity can be propagated. For higher moments in $p$ this is explicitly done in Proposition \ref{prop:gl_bds}, and from this the propagation of moments in $q$ follows by the commutation relations \eqref{CommRel}. For higher regularity, by \eqref{CommRel} one needs control of derivatives of the electric field; these in turn can be directly bounded by derivatives of $\gamma$ via an adaptation of Lemmas \ref{LemControlEF} and \ref{lem:Ediff} (see e.g.\ Lemma \ref{ControlEPropReg} for one additional derivative). As a consequence, given more regularity and/or moments on a solution, the convergence \eqref{eq:nu-conv} can then be shown to hold in a correspondingly strengthened topology.
 \item The convergence \eqref{eq:nu-conv} implies the asymptotic dynamic \eqref{eq:modscat} of Theorem \ref{thm:main_simple}: Letting
 \begin{equation}
  \mathcal{A}:(s,q,p)\mapsto(s,q+ps+\lambda s\ln(s)E_0(q),p+\lambda\ln(s)E_0(q)),
 \end{equation}
 by $\mathcal{I}^2=Id$ there holds that
 \begin{equation}
  \gamma\circ\mathcal{A}(s,q,p)=\mu\circ (\mathcal{I}\circ\mathcal{A})(s,q,p)=\mu\left(\frac{1}{s},\frac{q}{s}+p+\lambda\ln(s)E_0(q),q\right),
 \end{equation}
 which gives \eqref{eq:modscat} with $\mu_\infty(x,v)=\gamma_0(v,x)$ by relabeling the arguments.
\end{enumerate}

\end{remark}

The proof of Theorem \ref{ModScatThm} makes frequent use of the fact that \eqref{NewVP} is a transport equation and we can propagate uniform bounds using the maximum principle along the characteristics. In particular, writing 
\begin{equation}
\begin{split}
\mathcal{L}:=\partial_s+p\cdot\nabla_q+\lambda s^{-1}E[\gamma]\cdot\nabla_p,\qquad\mathcal{L}f&=\partial_sf+\hbox{div}_{q,p}\left\{(p,\lambda s^{-1}E[\gamma](q))\cdot f\right\}
\end{split}
\end{equation}
we have that if $h$ is a strong solution in a neighborhood of $s=1$ to
\begin{equation}
 \mathcal{L}[h]=F(s,q,p)
\end{equation}
with $h(1)\in L^r_{q,p}$ for some $r\geq 1$, then since the transport field is divergence free there holds that
\begin{equation}\label{eq:transp_bd}
 \norm{h(s)}_{L^r_{q,p}}\leq \norm{h(1)}_{L^r_{q,p}}+\int_s^1 \norm{F(s^\prime)}_{L^r_{q,p}}ds^\prime
\end{equation}
for all $0\le s\le 1$ in the interval of existence.

\subsection{Commutation Relations}
Now consider a solution $\gamma$ to \eqref{NewVP}, i.e.\ $\L[\gamma]=0$.
In order to decide which equation we want to use, it will be convenient to compute some commutation relations: For any $m,n\in \{1,2,3\}$ we have
\begin{equation}\label{CommRel}
\begin{split}
\mathcal{L}[q^m\gamma]&=\L[q^m]\gamma=p^m\gamma,\qquad
\mathcal{L}[p^m \gamma]=\lambda s^{-1}E^m[\gamma]\gamma,\\
\mathcal{L}[\partial_{q^m}\gamma]&=\partial_{q^m}(\L[\gamma])-(\partial_{q^m}\L)[\gamma]=-\lambda s^{-1}\partial_{q^m}E^j[\gamma]\partial_{p^j}\gamma,\qquad
\mathcal{L}[\partial_{p^m}\gamma]=-\partial_{q^m}\gamma,
\end{split}
\end{equation}
and we also remark that
\begin{equation}\label{CommRel2}
\begin{split}
\mathcal{L}[p^m\partial_{q^n}\gamma]&=-\lambda s^{-1}p^m\partial_{q^n}E^j[\gamma]\partial_{p^j}\gamma+\lambda s^{-1}E^m[\gamma]\partial_{q^n}\gamma,\\
\mathcal{L}[p^m\partial_{p^n}\gamma]&=-p^m\partial_{q^n}\gamma+\lambda s^{-1}E^m[\gamma]\partial_{p^n}\gamma.
\end{split}
\end{equation}

\subsection{Bootstrap and Global Existence}
As a first step we see that so long as the electric field remains bounded, we can propagate all the moments we want.

\begin{lemma}\label{lem:E-mom}
Let $\gamma$ be a strong solution of \eqref{NewVP} on $T^\ast\le s\le 1$ with ``initial'' data $\gamma(s=1)=\gamma_1$. 
Assume that $\gamma_1$ satisfies for some $a\in\N$, $r\in [2,\infty]$ that
\begin{equation}\label{ControlID}
\begin{split}
\Vert \langle p
\rangle^a\gamma_1\Vert_{L^r_{q,p}}&\le \varepsilon_0,
\end{split}
\end{equation}
and that
\begin{equation}\label{UniformControlEF}
\begin{split}
\vert E(s,q)\vert\le D,\qquad T^\ast\le s\le 1.
\end{split}
\end{equation}
Then there holds that
\begin{equation}\label{GlobalBoundsMom1}
\begin{split}
\Vert \gamma(s)\Vert_{L^r_{q,p}}&\le\varepsilon_0,\\
\Vert \langle p\rangle^a\gamma(s)\Vert_{L^r_{q,p}}&\le\varepsilon_0+aD\varepsilon_0\langle\ln(s)\rangle^{a}.\\
\end{split}
\end{equation}
\end{lemma}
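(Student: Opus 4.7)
The plan is to combine the transport bound \eqref{eq:transp_bd} with the commutation relations \eqref{CommRel} and close by induction on the integer $a$. The base case $a=0$ is immediate: since $\gamma$ satisfies $\mathcal{L}\gamma = 0$ and the transport field $(p, \lambda s^{-1}E)$ is divergence-free in $(q,p)$, applying \eqref{eq:transp_bd} with $F \equiv 0$ gives $\|\gamma(s)\|_{L^r_{q,p}} \le \|\gamma_1\|_{L^r_{q,p}} \le \varepsilon_0$ throughout $T^\ast \le s \le 1$, which is the first inequality of \eqref{GlobalBoundsMom1}.

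For the inductive step, assume the weighted bound at level $a-1$. Using the derivation property of $\mathcal{L}$ together with \eqref{CommRel}, I compute
\begin{equation*}
\mathcal{L}[\langle p\rangle^a \gamma] = \mathcal{L}[\langle p\rangle^a]\,\gamma = a\lambda s^{-1}\langle p\rangle^{a-2}(p\cdot E)\,\gamma,
\end{equation*}
since $\mathcal{L}\gamma = 0$. The hypothesis \eqref{UniformControlEF} together with the pointwise estimate $|\langle p\rangle^{a-2} p|\le \langle p\rangle^{a-1}$ gives a forcing bounded by $aDs^{-1}\langle p\rangle^{a-1}|\gamma|$. Applying \eqref{eq:transp_bd} to $h = \langle p\rangle^a\gamma$ then yields the Volterra-type inequality
\begin{equation*}
\|\langle p\rangle^a\gamma(s)\|_{L^r_{q,p}} \le \|\langle p\rangle^a\gamma_1\|_{L^r_{q,p}} + aD\int_s^1 (s')^{-1}\|\langle p\rangle^{a-1}\gamma(s')\|_{L^r_{q,p}}\,ds'.
\end{equation*}
Inserting the inductive hypothesis and iterating via $\int_s^1 (s')^{-1}\langle \ln s'\rangle^k\,ds' \lesssim \langle\ln s\rangle^{k+1}/(k+1)$ closes the induction and produces the polylogarithmic growth $\varepsilon_0\langle\ln s\rangle^a$ of \eqref{GlobalBoundsMom1}, with the explicit constant tracked from the induction.

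The argument is essentially mechanical once \eqref{CommRel} and the abstract transport estimate \eqref{eq:transp_bd} are in hand; no bootstrap is required because $D$ enters as a given bound rather than a self-consistently propagated quantity. The only point needing care is the $s^{-1}$ singularity in the forcing: it is not integrable uniformly as $s \to 0$, but it is logarithmically integrable, and iterating this observation $a$ times accounts precisely for the $\langle\ln s\rangle^a$ loss. The resulting estimates are uniform in $T^\ast$ and valid for all $r \in [2,\infty]$ directly from \eqref{eq:transp_bd}.
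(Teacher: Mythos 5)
Your proposal is correct and follows essentially the same route the paper takes: the paper's one-line proof applies the commutation relations \eqref{CommRel} and the transport bound \eqref{eq:transp_bd} inductively to the monomial weights $p^\beta$ with $|\beta|\le a$, and your version with the smooth weight $\langle p\rangle^a$ is merely a cosmetic repackaging of the same induction (with $\L[\langle p\rangle^a]=a\lambda s^{-1}\langle p\rangle^{a-2}(p\cdot E)$ playing the role of $\L[p^\beta]=\lambda s^{-1}\sum_j\beta_j p^{\beta-e_j}E^j$). The only point worth flagging is that iterating the Volterra inequality actually yields a bound of the shape $\varepsilon_0(1+D\langle\ln s\rangle)^a$, which reduces to the stated form $\varepsilon_0+aD\varepsilon_0\langle\ln s\rangle^a$ only up to $a$-dependent constants when $D\lesssim 1$ (the regime in which the lemma is applied); as you note, the downstream argument is insensitive to this.
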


\begin{proof}
The proof follows by applying \eqref{CommRel} and \eqref{eq:transp_bd} inductively to $p^{\beta}\gamma$, $\beta\in \N_0^{3}$, with $\abs{\beta}\leq a$. 
\end{proof}

\begin{proposition}\label{prop:gl_bds}
 Let $0<\eps_0\leq\eps_1\ll 1$, and let $\gamma$ be a solution of \eqref{NewVP} on $T^\ast\le s\le 1$ with ``initial'' data $\gamma(s=1)=\gamma_1$ satisfying
 \begin{equation}\label{ControlID0}
  \norm{\gamma_1}_{L^2_{q,p}}+\norm{\gamma_1}_{L^\infty_{q,p}}\leq \eps_0.
 \end{equation}
\begin{enumerate}[leftmargin=*]
 \item\label{it:mom} (Moments and the electric field) If there holds that
\begin{equation}\label{ControlID1}
 \norm{\ip{p}\gamma_1}_{L^2_{q,p}}+\norm{\ip{p}^m\gamma_1}_{L^\infty_{q,p}}\leq \eps_1,\qquad m\ge 2
\end{equation}
 then the electric field $E(s)$ remains bounded and the solutions satisfies the bounds
 \begin{equation}\label{GlobalBounds_mom}
 \begin{aligned}
  \norm{\ip{p}\gamma(s)}_{L^2_{q,p}}&\lesssim \eps_1\ip{\ln s},\\
  \norm{\ip{p}^a\gamma(s)}_{L^\infty_{q,p}}&\lesssim \eps_1\ip{\ln s}^a, \qquad 0\leq a\leq m.
 \end{aligned} 
 \end{equation}
 Moreover, there exists $C>0$ (independent of $T^\ast$) such that for any $T^\ast\le s_1\le s_2\le 1$ there holds
 \begin{equation}\label{ETCont}
  \vert E(s_1,q)-E(s_2,q)\vert \le C \varepsilon_1^2\,\ip{\ln(s_1)}^4\ip{\ln(s_2-s_1)}^2(s_2-s_1).
 \end{equation}

 \item\label{it:deriv} (Derivatives) Assume additionally that for some $b\in \{0,1\}$ there holds that
 \begin{equation}\label{ControlID2}
   \norm{\ip{p}^b\nabla_{p,q}\gamma_1}_{L^\infty_{q,p}}\leq \eps_1.
 \end{equation}
 Then we have the bounds
 \begin{equation}\label{GlobalBounds_deriv}
 \begin{split}
  \norm{\ip{p}^a\nabla_p\gamma(s)}_{L^\infty_{q,p}}&\lesssim \eps_1\ip{\ln s}^a,\qquad 0\leq a\leq b,\\
  \norm{\ip{p}^a\nabla_q\gamma(s)}_{L^\infty_{q,p}}&\lesssim \eps_1\ip{\ln s}^{5+a},\qquad 0\leq a\leq b.
 \end{split}
 \end{equation}
\end{enumerate}
\end{proposition}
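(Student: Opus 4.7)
The plan is to establish both parts of Proposition \ref{prop:gl_bds} via a continuity/bootstrap argument on $[T^*,1]$, combining the transport bound \eqref{eq:transp_bd} applied to the commuted equations \eqref{CommRel}, \eqref{CommRel2} with the field estimates of Section \ref{SecE}. Since $\L[\gamma]=0$, the bound $\|\gamma(s)\|_{L^r_{q,p}}\le\eps_0$ is preserved for any $r\in[2,\infty]$, so only higher moments and derivatives need to be bootstrapped.

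For part \ref{it:mom}, I would set up the bootstrap
\begin{equation*}
 \|E(s)\|_{L^\infty_q}\le D,\qquad \|\ip{p}^a\gamma(s)\|_{L^\infty_{q,p}}\le K\eps_1\ip{\ln s}^a\ (0\le a\le m),\qquad \|\ip{p}\gamma(s)\|_{L^2_{q,p}}\le K\eps_1\ip{\ln s},
\end{equation*}
with $K=K(m)$ and $D=O(\eps_1^2)\ll 1$. Lemma \ref{lem:E-mom} recovers the moment bounds with a better constant provided $mD\le K/2$. The main task is to recover the uniform bound on $E$; since a naive pointwise application of Lemma \ref{LemControlEF} only yields $\|E(s)\|\lesssim\eps_0\eps_1\ip{\ln s}^2$ (not uniform), I would instead apply \eqref{eq:Ediff} on the dyadic intervals $[s_{k+1},s_k]$ with $s_k=2^{-k}$. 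On each such interval $\|\ip{p}^2\gamma\|_{L^\infty}^2\lesssim\eps_1^2 k^4$, and with $sF=\lambda E$ the three terms in \eqref{eq:Ediff} contribute at most $\eps_1^2 k^6 2^{-k}$, which is summable. Telescoping gives $\|E(s)-E(1)\|_{L^\infty}\lesssim\eps_1^2$, and combined with $\|E(1)\|_{L^\infty}\lesssim\eps_0\eps_1$ from Lemma \ref{LemControlEF} applied at $s=1$, this closes the bootstrap on $E$. The Lipschitz bound \eqref{ETCont} is then a direct application of \eqref{eq:Ediff} with the closed moment bounds.

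For part \ref{it:deriv}, I would bootstrap $\|\nabla_p\gamma(s)\|_{L^\infty}\le K_1\eps_1$ and $\|\nabla_q\gamma(s)\|_{L^\infty}\le K_2\eps_1\ip{\ln s}^5$. The commutation relations \eqref{CommRel} combined with \eqref{eq:transp_bd} give
\begin{equation*}
 \|\nabla_p\gamma(s)\|\le\eps_1+\int_s^1\|\nabla_q\gamma(\sigma)\|d\sigma,\qquad \|\nabla_q\gamma(s)\|\le\eps_1+\int_s^1\sigma^{-1}\|\nabla_q E(\sigma)\|\|\nabla_p\gamma(\sigma)\|d\sigma.
\end{equation*}
The first closes immediately since $\int_0^1\ip{\ln\sigma}^5\,d\sigma<\infty$. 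For the second, \eqref{ControlEF'} together with the part \ref{it:mom} bounds yields $\|\nabla_q E(\sigma)\|\lesssim\eps_1^2\ip{\ln\sigma}^4+\eps_1\ip{\ln\sigma}^{-6/5}\|\nabla_q\gamma(\sigma)\|$, leading to a Gronwall inequality with integrable kernel $\sigma^{-1}\ip{\ln\sigma}^{-6/5}$ on $(0,1]$. The case $b=1$ is handled by the same scheme applied to $p\nabla_{p,q}\gamma$ via \eqref{CommRel2}: the additional forcing terms $\lambda s^{-1}E\cdot\nabla_{p,q}\gamma$ are controlled by the $b=0$ bounds together with the uniform bound on $E$, producing one extra factor of $\ip{\ln s}$ as claimed.

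The principal obstacle is the uniform boundedness of $E$ in $s$: the pointwise estimate of Lemma \ref{LemControlEF} alone gives only logarithmic growth, and a dyadic telescoping exploiting the time-continuity bound \eqref{eq:Ediff} is essential, with the geometric factor $2^{-k}$ dominating the polylogarithmic growth $k^{O(1)}$ of the moments. A secondary delicate point is the Gronwall closure for $\nabla_q\gamma$, where the singular coefficient $\sigma^{-1}$ in \eqref{NewVP} is precisely tamed by the logarithmic gain $\ip{\ln\sigma}^{-6/5}$ in the refined field estimate \eqref{ControlEF'}, making the integral kernel uniformly integrable on $(0,1]$.
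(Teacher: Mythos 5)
Your proposal is correct and follows essentially the same route as the paper: a bootstrap on the uniform bound for $E$, moment propagation via Lemma \ref{lem:E-mom}, dyadic telescoping with Lemma \ref{lem:Ediff} to control $E$ uniformly (with the geometric factor $2^{-k}$ beating the polylogarithmic moment growth), and Gr\"onwall with the integrable kernel $s^{-1}\ip{\ln s}^{-6/5}$ from \eqref{ControlEF'} for $\nabla_q\gamma$. The only imprecision is in the description of the $b=1$ step: for the $p\nabla_q\gamma$ component of \eqref{CommRel2} the forcing also contains the term $\lambda s^{-1}\,p\,\nabla_q E\cdot\nabla_p\gamma$, so beyond the uniform bound on $E$ you must invoke the growing bound $\norm{\nabla_q E(s)}_{L^\infty_q}\lesssim\eps_1^2\ip{\ln s}^4$ (which is already a byproduct of your $b=0$ Gr\"onwall argument); accounting for this is what produces the extra logarithm in \eqref{GlobalBounds_deriv}.
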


\begin{proof}
 We start by establishing claim \eqref{it:mom}. Let $C>0$ be a constant larger than all the implied constants appearing in Section \ref{SecE} and let $\varepsilon_1$ be small enough so that
\begin{equation}\label{AssEpsilon1}
 4C^2\varepsilon_1^2\le 1.
\end{equation}
We make the following bootstrap assumption: Let $I\subset [T^\ast,1]$ be such that for $s\in I$ there holds
 \begin{equation}\label{eq:btstrap}
 \begin{aligned}
 \norm{E(s)}_{L^\infty_q}&\leq 2C^2\eps_1^2.
 \end{aligned}
 \end{equation}
By the first line of \eqref{ControlEF} (with $A=1$) and the assumptions \eqref{ControlID0}, \eqref{ControlID1} we have that $1\in I\neq\emptyset$, and by continuity $I$ is closed in $[T^\ast,1]$. To establish the claim it then suffices to prove that \eqref{eq:btstrap} holds with strictly smaller constants, implying that $I$ is also open in $[T^\ast,1]$.
 
 To this end, note that by Lemma \ref{lem:E-mom} we have that for $0\leq a\leq m$,
 \begin{equation}\label{eq:p-mom}
  \norm{\ip{p}^a\gamma(s)}_{L^\infty_{q,p}}\leq \eps_1(1+aC^2\eps_1^2)\ip{\ln(s)}^a,\quad s\in I.
 \end{equation}
 By Lemma \ref{lem:Ediff} and \eqref{AssEpsilon1}, it then follows for $T^\ast\leq s_1\leq s_2\leq 1$ that
 \begin{equation}
  \norm{E(s_1)-E(s_2)}_{L^\infty_q}\leq 4C\eps_1^2\left[\ip{\ln(s_2-s_1)}^2+\varepsilon_1^2\langle\ln(s_2/s_1\rangle\right]\ip{\ln(s_1)}^4(s_2-s_1)
 \end{equation}
 and \eqref{ETCont} is proved. In particular, when $2^{-k}\le s_1\le s_2\le 2^{1-k}$, $k\ge 1$,
  \begin{equation}\label{ConvergenceEField}
  \norm{E(s_1)-E(s_2)}_{L^\infty_q}\leq 10C\eps_1^2 k^62^{-k}
 \end{equation} 
and since by \eqref{ControlEF} we have $\norm{E(1)}_{L^\infty_q}\leq 2C\eps_1^2$, we see that for $s\in I$
 \begin{equation}
  \norm{E(s)}_{L^\infty_q}\leq \norm{E(1)}_{L^\infty_q}+10C\varepsilon_1^2\sum_{k\ge 1} k^62^{-k} \ll C^2\eps_1^2,
 \end{equation}
 provided $C$ is large enough.
 
 To prove \eqref{it:deriv}, we use a similar bootstrap argument based on the assumptions
 \begin{equation}\label{DerBootstrap}
 \begin{split}
 \Vert \langle p\rangle^b\nabla_p\gamma(s)\Vert_{L^\infty_{q,p}}&\le  2C^4\varepsilon_1\langle \ln(s)\rangle^b,\\
 \Vert \langle p\rangle^b\nabla_q\gamma(s)\Vert_{L^\infty_{q,p}}&\le  2C^2 \varepsilon_1\langle \ln(s)\rangle^{5+b}.
 \end{split}
 \end{equation}
Using the commutation relations \eqref{CommRel} and \eqref{eq:transp_bd}, we deduce from \eqref{ControlID2} and \eqref{DerBootstrap} that
\begin{equation}\label{eq:dp_bd}
\begin{aligned}
 \norm{\nabla_p\gamma(s)}_{L^\infty_{q,p}}&\leq \norm{\nabla_p\gamma(1)}_{L^\infty_{q,p}}+\int_s^1 \norm{\nabla_q\gamma(s^\prime)}_{L^\infty_{q,p}} \,ds^\prime \leq C^3 2\eps_1,
\end{aligned} 
\end{equation}
provided $C>0$ is large enough. 

From the transport bounds  and the commutation relations \eqref{CommRel} we then deduce the estimate for $\nabla_q\gamma$: From \eqref{ControlEF'} we have under our assumptions \eqref{eq:btstrap} and
with \eqref{eq:p-mom} and \eqref{eq:dp_bd} that
\begin{equation}
\begin{aligned}
 \norm{\nabla_q\gamma(s)}_{L^\infty_{q,p}}&\leq \norm{\nabla_q\gamma(1)}_{L^\infty_{q,p}}+\int_s^1 \norm{\nabla_q E}_{L^\infty_{q}}\norm{\nabla_{p}\gamma}_{L^\infty_{q,p}} \,\frac{ds^\prime}{s^\prime}\\
 &\leq \eps_1+\int_s^1\left[\ip{\ln s^\prime}^{4}\norm{\gamma}_{L^2_{q,p}}^2+\norm{|p|^2\gamma}_{L^\infty_{q,p}}^2+\ip{\ln s^\prime}^{-\frac{6}{5}}\norm{\gamma}_{L^\infty_{q,p}}\norm{\nabla_q\gamma}_{L^\infty_{q,p}}\right]\norm{\nabla_p\gamma}_{L^\infty_{q,p}}\frac{ds^\prime}{s^\prime}\\
 &\leq \eps_1 + \ip{\ln s}^{5}\left(\eps_0^{2}+4\eps_1^2\right)\cdot 2C^4\eps_1+2C^4\eps_0\eps_1\int_s^1 \ip{\ln s^\prime}^{-\frac{6}{5}}\norm{\nabla_q\gamma(s^\prime)}_{L^\infty_{q,p}}\frac{ds^\prime}{s^\prime},
\end{aligned} 
\end{equation}
 so that by Gr\"onwall's lemma there holds that
 \begin{equation}\label{eq:dq_bd}
  \norm{\nabla_q\gamma(s)}_{L^\infty_{q,p}}\leq 10\eps_1\ip{\ln s}^5,
 \end{equation}
 provided $\eps_1$ is small enough. A similar argument using \eqref{CommRel2}, \eqref{eq:btstrap} and \eqref{DerBootstrap} shows that\begin{equation}\label{eq:pdp_bd}
\begin{aligned}
 \norm{p^m\partial_{p^n}\gamma(s)}_{L^\infty_{q,p}}&\leq \norm{\vert p\vert \nabla_{p}\gamma(1)}_{L^\infty_{q,p}}+\int_s^1 \norm{\vert p\vert \nabla_{q}\gamma(s^\prime)}_{L^\infty_{q,p}}ds^\prime +\int_s^1 \norm{E}_{L^\infty_{q}}\norm{\nabla_{p}\gamma(s^\prime)}_{L^\infty_{q,p}} \frac{ds^\prime}{s^\prime}\\
 &\leq C^3\eps_1\ip{\ln s}.
\end{aligned} 
\end{equation}
For the last bound, we see from \eqref{CommRel2} that we need a bound on the derivative of the electric field. Using \eqref{ControlEF'}, \eqref{eq:p-mom} and \eqref{eq:dq_bd}, we find that
\begin{equation}\label{ControlDerE}
\begin{aligned}
 \norm{\nabla_q E(s)}_{L^\infty_q}&\leq  C\left[\ip{\ln s}^{4}\norm{\gamma(s)}_{L^2_{q,p}}^2+\norm{p^2\gamma(s)}_{L^\infty_{q,p}}^2+\ip{\ln s}^{-\frac{6}{5}}\norm{\gamma(s)}_{L^\infty_{q,p}}\norm{\nabla_q\gamma(s)}_{L^\infty_{q,p}}\right]\\
 &\leq C\left(\eps_0^2+ 4\eps_1^2\right)\ip{\ln(s)}^4+10C\eps_0 \eps_1\ip{\ln s}^{5-\frac{6}{5}}\\
 &\leq 10 C\eps_1^2\ip{\ln s}^4,
\end{aligned} 
\end{equation}
so that \eqref{CommRel2} with \eqref{eq:transp_bd}, \eqref{eq:pdp_bd}, \eqref{eq:btstrap}, \eqref{eq:dq_bd} and \eqref{ControlDerE} gives
\begin{equation}
\begin{aligned}
 \norm{p^m \nabla_q\gamma(s)}_{L^\infty_{q,p}}&\leq \norm{\vert p\vert \nabla_q\gamma(1)}_{L^\infty_{q,p}}+\int_s^1 \left(\norm{\nabla_{q}E}_{L^\infty_q}\norm{\vert p\vert\nabla_{p}\gamma}_{L^\infty_{q,p}}+\norm{E}_{L^\infty_q}\norm{\nabla_{q}\gamma}_{L^\infty_{q,p}}\right)\,\frac{ds^\prime}{s^\prime}\\
 &\leq \eps_1+\eps_1\int_s^1 \left(10C^4\eps_1^3\ip{\ln s^\prime}^5+20C^2\eps_1^3\ip{\ln s^\prime}^5\right)\,\frac{ds^\prime}{s^\prime}\\
 &\leq \eps_1 \ip{\ln s}^6.
\end{aligned} 
\end{equation}
This closes the bootstrap \eqref{DerBootstrap}.
\end{proof}

\subsection{Asymptotic Behavior}
From \eqref{ETCont} we can deduce that the electric field has an asymptotic limit:
\begin{corollary}\label{cor:Econv}
 Let $\gamma$ be a solution of \eqref{NewVP} as in Proposition \ref{prop:gl_bds}, which is moreover defined for $s\in (0,1]$. Then the limit
\begin{equation*}
\begin{split}
 E_0(q):=\lim_{s\to0}E(s,q)
\end{split}
\end{equation*}
exists and is bounded
\begin{equation*}
\Vert E_0\Vert_{L^\infty_q}\lesssim \varepsilon_1^2.
\end{equation*}
In addition we have the following convergence rate: if $0\le s_1\le s_2\le 1,$ there holds
\begin{equation}\label{eq:E-rate}
\Vert E(s_1)-E(s_2)\Vert_{L^\infty_q}\lesssim\varepsilon^2\langle \ln(s_2)\rangle^6s_2.
\end{equation}
\end{corollary}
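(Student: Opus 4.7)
The strategy is to show $E(s,\cdot)$ is Cauchy in $L^\infty_q$ as $s \to 0$ by exploiting the dyadic consequence \eqref{ConvergenceEField} of the Lipschitz-type estimate \eqref{ETCont}. All the hard work has been done in Proposition \ref{prop:gl_bds}; what remains is essentially a telescoping sum argument.

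First I would establish existence of the limit. For each integer $k \geq 1$, \eqref{ConvergenceEField} gives
\begin{equation*}
\sup_{s_1, s_2 \in [2^{-k}, 2^{1-k}]} \Vert E(s_1) - E(s_2)\Vert_{L^\infty_q} \lesssim \varepsilon_1^2 k^6 2^{-k},
\end{equation*}
and the geometric factor $2^{-k}$ dominates the polynomial $k^6$, so $\sum_{k \geq 1} k^6 2^{-k} < \infty$. Combined with continuity of $s \mapsto E(s,\cdot)$ inside each dyadic block, this shows that $\{E(2^{-k})\}_{k \geq 1}$ is a Cauchy sequence in $L^\infty_q$, and more generally that $s \mapsto E(s,\cdot)$ has a limit $E_0 \in L^\infty_q$ as $s \to 0$. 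The bound on $\Vert E_0\Vert_{L^\infty_q}$ is obtained by the triangle inequality:
\begin{equation*}
\Vert E_0\Vert_{L^\infty_q} \leq \Vert E(1)\Vert_{L^\infty_q} + \sum_{k \geq 1} \sup_{s_1, s_2 \in [2^{-k}, 2^{1-k}]}\Vert E(s_1) - E(s_2)\Vert_{L^\infty_q} \lesssim \varepsilon_1^2,
\end{equation*}
where the bound $\Vert E(1)\Vert_{L^\infty_q} \lesssim \varepsilon_1^2$ comes from the first line of \eqref{ControlEF} applied with the moment bounds \eqref{GlobalBounds_mom} at $s=1$.

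For the convergence rate \eqref{eq:E-rate}, given $0 \leq s_1 \leq s_2 \leq 1$, I would let $k_0 \geq 0$ be the unique integer with $s_2 \in [2^{-k_0}, 2^{1-k_0}]$, and decompose the interval $[s_1, s_2]$ into the (at most countably many) dyadic pieces $[\max(s_1, 2^{-k-1}), \min(s_2, 2^{-k})]$ for $k \geq k_0 - 1$. Applying \eqref{ConvergenceEField} on each such piece and summing gives
\begin{equation*}
\Vert E(s_1) - E(s_2)\Vert_{L^\infty_q} \lesssim \varepsilon_1^2 \sum_{k \geq k_0 - 1} k^6 2^{-k} \lesssim \varepsilon_1^2 \, k_0^6 \, 2^{-k_0} \lesssim \varepsilon_1^2 \, \langle \ln s_2\rangle^6 \, s_2,
\end{equation*}
since the sum is dominated by its first term (geometric tail) and $k_0 \sim \langle \ln s_2\rangle$, $2^{-k_0} \sim s_2$.

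There is no real obstacle here; the only thing to be careful about is that the factor $\langle \ln s_1\rangle^4$ appearing in \eqref{ETCont} (which can be much worse than $\langle \ln s_2\rangle$ when $s_1 \ll s_2$) would destroy the rate if one applied \eqref{ETCont} in one shot across $[s_1, s_2]$. The dyadic decomposition sidesteps this: on each block $[2^{-k}, 2^{1-k}]$ the ``bad'' logarithm $\langle \ln s_1\rangle$ is comparable to $\langle \ln s_2\rangle \sim k$, so the estimate is dimensionally balanced, and summing over dyadic scales $k \geq k_0$ recovers only $\langle \ln s_2\rangle^6 s_2$.
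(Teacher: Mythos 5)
Your proposal is correct and matches the paper's argument: the paper's own proof is just the one-liner ``It follows from \eqref{ConvergenceEField} that $E(2^{-k})$ is Cauchy in $L^\infty_q$. Summing again \eqref{ConvergenceEField} gives \eqref{eq:E-rate}.'' You have simply spelled out that telescoping dyadic sum, and your observation that a direct one-shot application of \eqref{ETCont} would fail because of the uncontrolled $\ip{\ln(s_1)}^4$ factor is exactly why the dyadic decomposition is the right move.
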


The rate of convergence \eqref{eq:E-rate} is linked to the topology we choose through the continuity equation \eqref{CE}. Our assumptions scale like ${\bf j}\in L^\infty_{q}$ and we obtain almost Lipschitz bounds in time.

\begin{proof}
It follows from \eqref{ConvergenceEField} in the proof above that $E(2^{-k})$ is Cauchy in $L^\infty_q$. Summing again \eqref{ConvergenceEField} gives \eqref{eq:E-rate}.
\end{proof}

Now we are in the position to give the proof of the modified scattering result:
\begin{proof}[Proof of Theorem \ref{ModScatThm}]
From Proposition \ref{prop:gl_bds} we obtain a global solution $\gamma$ on $(0,1]$, which satisfies \eqref{GlobalBounds_mom}, \eqref{GlobalBounds_deriv} and \eqref{eq:E-rate}. Next we define
\begin{equation}
 \nu(s,q,p):=\gamma(s,q+ps+\lambda s\ln(s)E_0(q),p+\lambda\ln(s)E_0(q)),
\end{equation}
which satisfies
\begin{equation}
\begin{aligned}
 \partial_s\nu&=\partial_s\gamma+p\cdot\nabla_q\gamma+\lambda(1+\ln(s))E_0(q)\cdot\nabla_q\gamma+\lambda s^{-1}E_0(q)\cdot\nabla_p\gamma\\
 &=\lambda E_0(q)\cdot\nabla_q\gamma+\lambda s^{-1} [E_0(q)-E(s,q+ps+\lambda s\ln(s)E_0(q)]\cdot\nabla_p\gamma,
\end{aligned} 
\end{equation}
where 
\begin{equation}
 s^{-1}\abs{E_0(q)-E(s,q+ps+\lambda s\ln(s)E_0(q)}\lesssim s^{-1}\abs{E_0(q)-E(s,q)}+\abs{p+\ln(s)E_0(q)}\norm{\nabla E(s)}_{L^\infty_q}.
\end{equation}
Hence by \eqref{ControlEF'}, Corollary \ref{cor:Econv} and \eqref{ControlDerE} we have that
\begin{equation}
 \norm{\partial_s\nu}_{L^\infty_{q,p}}\lesssim \eps_1^2\norm{\nabla_q\gamma}_{L^\infty_{q,p}}+\eps_1^2\ip{\ln(s)}^4\norm{p\nabla_p\gamma}_{L^\infty_{q,p}}+\eps_1^2\ip{\ln(s)}^6\norm{\nabla_p\gamma}_{L^\infty_{q,p}},
\end{equation}
which is integrable over $0\leq s\leq 1$.
\end{proof}

\bigskip
\section{Wave Operators and Cauchy Problem at Infinity}\label{sec:wave_ops}

Using the symplectic structure, the equations \eqref{NewVP} can be written as
\begin{equation}\label{PoissonBracket}
\gamma_s + \{\gamma,\mathcal H \}=0,\qquad \{f,g\}:=\nabla_qf\cdot\nabla_pg-\nabla_pf\cdot\nabla_qg
\end{equation}
with the Hamiltonian
\begin{equation}\label{OldH}
\mathcal H(s,q,p) := \frac{|p|^2}{2} + \lambda s^{-1}\phi(s,q),
\end{equation}
where $-\Delta \phi(s,q) = \int \gamma^2(s,q,p) dp$. We wish to find a new coordinate system $(w, z)$ for which the Cauchy problem at $s=0$ can be solved. For this, we
introduce the type-3 generating function\footnote{see e.g.\ \cite[Chapter 8]{MO2017}}
\begin{equation*}
S(s, w, p) := w\cdot p+ \frac{|p|^2}{2}s + \lambda \ln(s)\phi_0(w),
\end{equation*}
where $\phi_0 (q) = \phi(0,q)$. 
This gives rise to the canonical change of coordinates
\begin{align*}
 z&= \nabla_{w} S(w,p)=   p+  \lambda \ln(s)\nabla \phi_0(w), \\
q &=\nabla_p S(w,p) = w+ ps = w+ zs -  \lambda s \ln(s)\nabla \phi_0(w),
\end{align*}
or
\begin{equation}\label{ChangeOfCoord}
\begin{array}{rlcrl}
q=&w+sz-\lambda s\ln(s)\nabla\phi_0(w),&\qquad &w=&q-sp,\\
p=&z-\lambda\ln(s)\nabla\phi_0(w),&\qquad &z=&p+\lambda\ln(s)\nabla\phi_0(q-sp),
\end{array}
\end{equation}
with Jacobian matrix
\begin{equation}\label{JacobChangeCoord}
\frac{\partial (w,z)}{\partial (q,p)}=\begin{pmatrix} Id&-sId\\-\lambda\ln(s)\nabla E_0&Id+\lambda s\ln(s)\nabla E_0\end{pmatrix}.
\end{equation}
This corresponds to the new Hamiltonian
\begin{align*}
\mathcal K(s,w,z) &:= \mathcal H(s,q,p) - \partial_sS(s,w,p) = \lambda s^{-1} \big[ \phi(s,q) -  \phi_0(w)\big]
\end{align*}
and vector field
\begin{equation}\label{eq:dwK}
\begin{split}
\nabla_w\mathcal{K}&=-\lambda s^{-1}\left\{E(s,q)-E_0(w)\right\}-\lambda^2\ln(s)E(s,q)\cdot\nabla E_0(w),\qquad\nabla_z\mathcal{K}=-\lambda E(q),
\end{split}
\end{equation}
with the notation $E =-\nabla \phi$, $E_0 = -\nabla \phi_0$. It follows that
\begin{equation}\label{SigmaGamma}
\sigma(s,w,z) := \gamma(s,q,p)
\end{equation}
solves 
\begin{equation}\label{eq:sigma}
\begin{split}
0&=\partial_s\sigma + \{\sigma,\mathcal K\}=\partial_s\sigma + \nabla_w \sigma \cdot \nabla_z \mathcal K -  \nabla_z \sigma \cdot\nabla_w \mathcal K.\\
\end{split}
\end{equation}

\begin{remark}
We note that the new variables $(w,z)$ have a simple interpretation in terms of the original variables in \eqref{VP}:
$w=v$, $z=x-tv-\lambda\ln(t)E_0(v)$, which are the variables in which the modified scattering of Theorem \ref{thm:main_simple} and \cite{IPWW2020} is expressed.
\end{remark}

The main result of this section then is the following: 
\begin{theorem}\label{WaveOpsThm}
Assume that initial data $\sigma_0$ and $E_0$\footnote{these are linked through \eqref{DefESigma}} satisfy
\begin{equation}\label{eq:Einfty_bds}
\norm{E_0}_{W^{3,\infty}}\leq c_0^2,
\end{equation}
and
\begin{equation}\label{AssumptionsSigma0}
\begin{split}
\Vert \sigma_0\Vert_{L^2_{w,z}}+\Vert \langle z\rangle^5\sigma_0\Vert_{L^\infty_{w,z}}+\sum_{0\le m+n\le 2}\Vert \langle z\rangle^m\nabla_z^m\nabla_w^n\sigma_0\Vert_{L^\infty_{w,z}}&\le c_0.
\end{split}
\end{equation}
Then there exists $T^\ast=T^\ast(c_0)>0$ and a unique solution $\sigma\in C^0_{s}([0,T^\ast):L^2_{w,z})$ of \eqref{eq:sigma} with ``initial'' data $\sigma(s=0)=\sigma_0$, and such that $s\partial_s\sigma,\,\nabla_{w,z}\sigma\in C^0_{s,w,z}$. Moreover, for $0\leq s<T^\ast$ we have that
\begin{equation}\label{PropagationLawsSigma}
\begin{split}
\Vert \sigma(s)\Vert_{L^2_{w,z}}+ \norm{\ip{z}^5\sigma(s)}_{L^\infty_{w,z}}+\norm{\nabla_{w,z}\sigma(s)}_{L^\infty_{w,z}}\lesssim c_0,\qquad
\Vert \langle w,z\rangle^\ell\sigma(s)\Vert_{L^\infty_{w,z}}&\lesssim \Vert \langle w,z\rangle^\ell\sigma_0\Vert_{L^\infty_{w,z}},
\end{split}
\end{equation}
and if $c_0$ is sufficiently small we may take $T^\ast=1$.
\end{theorem}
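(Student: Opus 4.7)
The plan is to solve \eqref{eq:sigma} as a Cauchy problem at the singular time $s=0$ by exploiting that, in the $(w,z)$-coordinates, the Hamiltonian vector field $(\nabla_z\mathcal{K},-\nabla_w\mathcal{K})$ is bounded up to a logarithmic and a linear-in-$z$ factor. From \eqref{eq:dwK} and the identity $q-w = sz-\lambda s\ln(s)\nabla\phi_0(w)$ coming from \eqref{ChangeOfCoord}, I split
\begin{equation*}
s^{-1}[E(s,q)-E_0(w)] = s^{-1}[E(s,q)-E_0(q)] + s^{-1}[E_0(q)-E_0(w)].
\end{equation*}
The first term is $O(\ip{\ln s}^2)$ by Lemma \ref{lem:Ediff} applied with $s_0\to 0$, as soon as suitable moments of the reconstructed density $\gamma(s,q,p)=\sigma(s,w,z)$ are under control; the second is $O(\ip{z}+\ip{\ln s})$ by the mean-value theorem together with \eqref{eq:Einfty_bds}. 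Consequently $\nabla_w\mathcal{K} = O(\ip{z}+\ip{\ln s}^2)$, which is integrable at $s=0$ when paired with the weighted norms below.

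The central step will be an a priori bootstrap on
\begin{equation*}
\mathcal{A}(s) := \norm{\sigma(s)}_{L^2_{w,z}} + \norm{\ip{z}^5\sigma(s)}_{L^\infty_{w,z}} + \sum_{m+n\le 2}\norm{\ip{z}^m\nabla_z^m\nabla_w^n\sigma(s)}_{L^\infty_{w,z}},
\end{equation*}
assumed $\le 2c_0$ on some $[0,T^*)$. The $L^2$ bound is immediate by symplecticity of the flow. Moments $\ip{z}^\ell\sigma$ propagate via the transport principle \eqref{eq:transp_bd} applied to $\ip{z}^\ell\sigma$: the resulting commutator produces $\nabla_w\mathcal{K}\cdot z\ip{z}^{\ell-2}$, which closes at the cost of one moment. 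To feed Lemma \ref{lem:Ediff} I will reconstruct $E$ and $\nabla E$ from $\sigma$ by inverting \eqref{ChangeOfCoord} to obtain $\gamma$ and applying Lemmas \ref{LemControlEF} and \ref{ControlEPropReg}; the required $p$-moments and derivatives of $\gamma$ follow from the weighted bounds on $\sigma$ and \eqref{eq:Einfty_bds}, since the Jacobian \eqref{JacobChangeCoord} is a small $\ip{\ln s}$-perturbation of the identity.

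The hard part will be the derivative estimate, which is exactly what forces the weighted structure $\ip{z}^m\nabla_z^m$ in $\mathcal{A}$. Differentiating \eqref{eq:sigma} by $\nabla_w$ creates an error $\nabla_w^2\mathcal{K}\cdot\nabla_z\sigma$; repeating the splitting above, now using Lemma \ref{lem:Ediff}(ii) on $s^{-1}[\nabla E(s,q)-\nabla E_0(q)]$ and a first-order Taylor expansion on $s^{-1}[\nabla E_0(q)-\nabla E_0(w)]$, yields $|\nabla_w^2\mathcal{K}|\lesssim \ip{z}+\ip{\ln s}^2$, so that the $\ip{z}$-weight on $\nabla_z\sigma$ absorbs the singular part. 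The analogous analysis for the second derivatives invokes Lemma \ref{ControlEPropReg} and explains the high-moment demand $\ip{z}^5\sigma\in L^\infty_{w,z}$ in \eqref{AssumptionsSigma0}: five $z$-moments are what remains after two derivatives consume weights via the Hessian bound and after the continuity-equation lemmas for $\nabla^2 E$ each trade moments for regularity.

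To produce an actual solution, I regularize by solving \eqref{eq:sigma} on $[\delta,1]$ for each $\delta>0$ by classical characteristics (the equation is regular away from $s=0$), with data $\sigma|_{s=\delta}=\sigma_0$. The a priori estimates above are uniform in $\delta$, so a compactness argument as $\delta\to 0$ produces a limit $\sigma$ satisfying the claimed continuity; the stronger regularity $s\partial_s\sigma\in C^0_{s,w,z}$ follows from the equation $\partial_s\sigma=-\{\sigma,\mathcal{K}\}$ together with the pointwise growth of $\nabla\mathcal{K}$. Uniqueness is a standard $L^2$-estimate on the difference of two solutions, which closes because the transport field is (log-)bounded. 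Finally, for $c_0$ sufficiently small the bootstrap strictly improves $2c_0$ to $c_0$ on any subinterval, so the continuity argument extends the solution all the way up to $T^*=1$.
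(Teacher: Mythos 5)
The overall strategy you outline—bootstrap on weighted norms of $\sigma$, reconstruct $E$ via the continuity-equation estimates, close the derivative bootstrap against bounds on the Hessian of $\mathcal K$—is the right one, and your decomposition of $\nabla_w\mathcal K$ and the reason for demanding five $z$-moments are both on target. However, the fixed weights $\ip{z}^m$ on $\nabla_z^m\nabla_w^n\sigma$ in your bootstrap quantity do \emph{not} close, and this is the central technical issue the paper is designed around.

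The obstruction is the off-diagonal term you do not discuss: in the equation for $\nabla_z\sigma$ (see \eqref{eq:gradComm}) there appears $\nabla_z^2\mathcal K\cdot\nabla_w\sigma$, and from \eqref{FormulasHessianK} one has $\nabla_z^2\mathcal K=-\lambda s\,\nabla E(s,q)=O(c_0^2\,s)$. If you weight $\nabla_z\sigma$ by $\ip{z}$, then the Duhamel contribution to $\norm{\ip{z}\nabla_z\sigma}_{L^\infty_{w,z}}$ from this term scales like
\begin{equation*}
\bigl\lVert\ip{z}\,\nabla_z^2\mathcal K\cdot\nabla_w\sigma\bigr\rVert_{L^\infty_{w,z}}\sim s\,\ip{z}\,\norm{\nabla_w\sigma}_{L^\infty_{w,z}},
\end{equation*}
which is unbounded in $z$ for any $s>0$, since you only control $\norm{\nabla_w\sigma}_{L^\infty}$ and not $\norm{\ip{z}\nabla_w\sigma}_{L^\infty}$. (Dropping the weight entirely does not help either: then $\nabla_w^2\mathcal K\cdot\nabla_z\sigma$ in the equation for $\nabla_w\sigma$ has the piece $s^{-1}\bigl[\nabla E_0(q)-\nabla E_0(w)\bigr]\sim\min\{s^{-1},|z|\}$, which is unbounded for small $s$.) The paper resolves this with the time-dependent weight $\theta(s,z)=\ip{z}/(1+s\ip{z})$ of \eqref{DefCompression}: it equals $\ip{z}$ at $s=0$ (so matches \eqref{AssumptionsSigma0}), but satisfies $\theta\le\min\{\ip{z},s^{-1}\}$, whence $\theta\,\nabla_z^2\mathcal K\lesssim s\theta\le 1$ and $\theta^{-1}\nabla_w^2\mathcal K\lesssim 1+\theta^{-1}\ip{\ln s}^4$ are both controlled — this is exactly \eqref{ControlHessianK}. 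Moreover the crucial differential identity $\partial_s\theta=-\theta^2$ ensures $\mathfrak L(\ln\theta)\le C c_0^2\ip{\ln s}^3$ (with a favorable sign on the leading term) so the weight can be transported; a fixed weight $\ip{z}$ has no such compensating structure. This ``compression'' weight is precisely what the paper means by compensating the ``ill-conditioned Hessian'' of $\mathcal K$, and it is the missing idea in your proposal. Once $\theta$ is in place, the second-derivative bootstrap follows the same pattern as your sketch (but with $\theta^m\nabla_z^m\nabla_w^n\sigma$ in place of $\ip{z}^m\nabla_z^m\nabla_w^n\sigma$), using \eqref{3DerK}.

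Two smaller remarks. Your construction by regularizing the initial time ($\sigma|_{s=\delta}=\sigma_0$, let $\delta\to 0$ via compactness) is a reasonable alternative to the paper's Picard iteration with a linearized Hamiltonian $\mathcal K_n$, but it requires additional care to attach the data at $s=0$ and to verify uniqueness; the paper instead iterates directly on $[0,T]$ and proves a contraction in $L^\infty_{s,w,z}$ using that $E$ is quadratic in $\gamma$ (estimate \eqref{ControlELWP}), which also delivers uniqueness for free. Also, your stated bound $|\nabla_w\mathcal K|=O(\ip{z}+\ip{\ln s}^2)$ is morally right but you should phrase it as $\min\{s^{-1},|z|\}+\ip{\ln s}^3$ as in \eqref{MWOBootstarpE}; the distinction between $\ip{z}$ and $\min\{s^{-1},|z|\}$ is exactly what motivates the $\theta$ weight.
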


The proof of Theorem \ref{WaveOpsThm} is given below in Section \ref{ssec:waveops_lwp}, after we have established some a priori estimates on the propagation of moments and derivatives for the system \eqref{eq:sigma} in Sections \ref{ssec:waveops_comm} and \ref{ssec:waveops_apriori}.

\subsection{Commutation Relations}\label{ssec:waveops_comm}
Writing $\mathfrak L = \partial_s + \{\cdot,\mathcal K\}$, for moments in $w,z$ we have the commutation relations
\begin{align}\label{CommutationRelationsAddedAtTheEnd}
\mathfrak L [w_j\sigma]  &=- \lambda E_j(s,q) \sigma,\\
\mathfrak L [z_j \sigma] &=    \left(\lambda s^{-1} [E_j(s,q) - E_{0,j}(w)] + \lambda^2 \ln(s) E(s,q) \cdot \nabla_{w} E_{0,j}(w)\right)\sigma.
\end{align}
For the derivatives, we have 
\begin{equation}\label{eq:GgradComm}
\begin{split}
\mathfrak{L}(\partial_1\sigma)&=\{\partial_1\mathcal{K},\sigma\},\qquad \mathfrak{L}(\partial_2\partial_1\sigma)=\{\partial_1\mathcal{K},\partial_2\sigma\}+\{\partial_2\mathcal{K},\partial_1\sigma\}+\{\partial_2\partial_1\mathcal{K},\sigma\}
\end{split}
\end{equation}
and this gives in block diagonal form
\begin{equation}\label{eq:gradComm}
\mathfrak L \begin{pmatrix} \nabla_{w} \sigma \\ \nabla_{z} \sigma  \end{pmatrix} = \begin{pmatrix} -\nabla_{w}\nabla_{z} \mathcal K & \nabla_{w}^2 \mathcal K \\
-\nabla_{z}^2 \mathcal K&  \nabla_{w}\nabla_{z} \mathcal K  \end{pmatrix}\begin{pmatrix} \nabla_{w} \sigma \\ \nabla_{z} \sigma   \end{pmatrix}, 
\end{equation}
with
\begin{equation}\label{FormulasHessianK}
\begin{split}
\nabla_{w^jw^k}^2 \mathcal K &= -\lambda s^{-1}\partial_j[ E_k(s,q) - E_{0,k}(w)]\\
&\quad -\lambda^2\ln(s)\left\{\partial_j E(s,q)\cdot\partial_k E_{0}(w) +\partial_kE(s,q)\cdot\partial_j E_{0}(w) +\partial_j\partial_k E_0(w)\cdot E(s,q)\right\}\\
&\quad -\lambda^3 s \ln(s)^2 \partial_k E_{0,a}(w)\partial_jE_{0,b}(w) \cdot \partial_a E_b(s,q),\\
\nabla_{w}\nabla_{z} \mathcal K &=-\lambda \nabla E(s,q)  - \lambda^2 s\ln(s) (\nabla E(s,q)\cdot \nabla) E_0(w), \\
\nabla_{z}^2 \mathcal K &= - \lambda s \nabla E(s,q).
\end{split}
\end{equation}
We note that the matrix $\nabla^2_{w,z}\mathcal{K}$ is {\it ill-conditioned}, and to mitigate this effect, we introduce a weight on the gradient:
\begin{equation}\label{DefCompression}
\begin{split}
\theta(s,z):=\frac{\langle z\rangle}{1+s\langle z\rangle},\qquad \frac{1}{2}\min\{\langle z\rangle,s^{-1}\}\le \theta(s,z)\le\min\{\langle z\rangle,s^{-1}\},
\end{split}
\end{equation}
which is linked to the vector field through \eqref{ControlHessianK} and satisfies nice differential equalities
\begin{equation*}
\begin{split}
\qquad\partial_s\theta=-\theta^2,\quad \nabla_z\theta=(z/\langle z\rangle^3)\cdot \theta^2.\\
\end{split}
\end{equation*}

\subsection{A Priori Estimates}\label{ssec:waveops_apriori}

The goal of this section is to bootstrap the following assumptions: Given $c_0$ as in \eqref{eq:Einfty_bds}, we assume that for $0\le s\le T(c_0)$ there holds that
\begin{equation}\label{MWOBootstarpSigma}
\begin{split}
\Vert \sigma(s)\Vert_{L^2_{w,z}}+\Vert \langle z\rangle^5\sigma(s)\Vert_{L^\infty_{w,z}}&\le A\le 4c_0,\\
\Vert \nabla_w\sigma(s)\Vert_{L^\infty_{w,z}}+\Vert\theta\nabla_z\sigma(s)\Vert_{L^\infty_{w,z}}&\le B\le 4c_0,\\
\Vert \nabla^2_{w,w}\sigma(s)\Vert_{L^\infty_{w,z}}+\Vert\theta\nabla^2_{w,z}\sigma(s)\Vert_{L^\infty_{w,z}}+\Vert\theta^2\nabla^2_{z,z}\sigma(s)\Vert_{L^\infty_{w,z}}&\le C\le 4c_0.
\end{split}
\end{equation}
As we will show below in Section \ref{ssec:apriori-e}, this implies in particular that
\begin{equation}\label{MWOBootstarpE}
\begin{split}
\vert \nabla_z\mathcal{K}(w,z)\vert&\le 2c_0^2,\\
 \vert \nabla_w\mathcal{K}(w,z)\vert&\le c_0^2\left\{\min\{s^{-1},\vert z\vert\}+\langle \ln(s)\rangle^{3}\right\},
\end{split}
\end{equation}
and that we have the derivative bounds
 \begin{equation}\label{ControlHessianK}
 \begin{split}
 \Vert \nabla_w\nabla_z\mathcal{K}\Vert_{L^\infty_{w,z}}+\Vert \theta\nabla_z\nabla_z\mathcal{K}\Vert_{L^\infty_{w,z}}+\Vert \theta^{-1}\nabla_w\nabla_w\mathcal{K}\Vert_{L^\infty_{w,z}}\le c_0^2\langle \ln(s)\rangle^{4}.
 \end{split}
 \end{equation}
These in turn can then be used to close the bootstrap for \eqref{MWOBootstarpSigma}, as in Section \ref{ssec:apriori-sigma}.

\subsubsection{A Priori Control on the Electric Field}\label{ssec:apriori-e}

Here we consider a particle density $\sigma\in C^0([0,s]:L^2_{w,z})$ such that $\sigma(0)=\sigma_0$ and which satisfies the bounds \eqref{MWOBootstarpSigma}. This creates an electric field $E(s)$ through the formula
\begin{equation}\label{DefESigma}
\begin{split}
E[\sigma](s,Q)=E(s,Q)=\iint \frac{Q-q(s,w,z)}{\vert Q-q(s,w,z)\vert^3}\sigma^2(s,w,z)dwdz=\iint \frac{Q-q}{\vert Q-q\vert^3}\gamma^2(s,q,p)dqdp
\end{split}
\end{equation}
where $\sigma$ and $\gamma$ are related through \eqref{SigmaGamma}. Simple bounds give uniformly in $R>0$:
\begin{equation*}
\begin{split}
\vert E(s_2,Q)-E(s_1,Q)\vert&\lesssim R\Vert \langle z\rangle^4\sigma\Vert_{L^\infty_{s,w,z}}^2+R^{-2}\Vert \sigma(s)\Vert_{L^\infty_sL^2_{w,z}}\Vert \sigma(s_2)-\sigma(s_1)\Vert_{L^2_{w,z}},
\end{split}
\end{equation*}
which ensures that $E$ is continuous in time. In the next lemma we adapt the bounds from Section \ref{SecE} to obtain stronger control as in \eqref{MWOBootstarpE}, \eqref{ControlHessianK}.

\begin{lemma}\label{lem:EDecay}

Let $\sigma\in C^0([0,T],L^2_{w,z})$ with $\sigma(0)=\sigma_0$ such that $\gamma$ satisfies the continuity equation \eqref{ContinuityEquation} and $E_0$ satisfies \eqref{eq:Einfty_bds}. Then there exists $T^*(c_0) \in (0,T]$ such that
\begin{enumerate}[label=(\roman*)]
 \item Assuming the first line of \eqref{MWOBootstarpSigma} holds, we obtain \eqref{MWOBootstarpE} for $0\le s\le T^\ast$.
 
 \item Assuming both lines of \eqref{MWOBootstarpSigma} hold, we obtain \eqref{ControlHessianK} for $0\le s\le T^\ast$.

\end{enumerate}
\end{lemma}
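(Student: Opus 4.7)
The plan is to translate the moment/derivative bounds on $\sigma$ in the $(w,z)$ variables into bounds on $\gamma$ in the $(q,p)$ variables, apply the electric-field estimates of Section \ref{SecE}, and then decompose $E(s,q)-E_0(w)$ (and its derivative) into a \emph{spatial} shift piece and a \emph{temporal} drift piece. The key observations used throughout are that $\Phi_s: (q,p)\mapsto (w,z)$ has unit Jacobian (so all $L^r_{q,p}$ norms transfer to $L^r_{w,z}$), that $p=z-\lambda\ln(s)\nabla\phi_0(w)$ gives $\abs{p}\le \abs{z}+c_0^2\ip{\ln s}$, and that $q-w=sz+\lambda s\ln(s)E_0(w)$ so $\abs{q-w}\le s(\abs{z}+c_0^2\ip{\ln s})$. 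Using $\ip{z}^5\sigma\in L^\infty$ this yields
\[
 \norm{\gamma}_{L^2_{q,p}}^2+\norm{\ip{p}^2\gamma}_{L^\infty_{q,p}}^2+\norm{\ip{p}^4\gamma}_{L^\infty_{q,p}}\norm{\nabla_q\gamma}_{L^\infty_{q,p}}\lesssim c_0^2\ip{\ln s}^C,
\]
for some $C$, with $\nabla_{q,p}\gamma$ expressed via $\nabla_{w,z}\sigma$ through the Jacobian \eqref{JacobChangeCoord} and the bootstrap bounds \eqref{MWOBootstarpSigma}.

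For part (i), applying the first line of \eqref{ControlEF} with $A=1$ immediately gives $\norm{E(s)}_{L^\infty}\le c_0^2$, which controls $\nabla_z\mathcal K=-\lambda E$. For $\nabla_w\mathcal K$ in \eqref{eq:dwK}, the $\ln(s)E\cdot\nabla E_0$ term is bounded by $c_0^4\ip{\ln s}$. For the singular term $s^{-1}[E(s,q)-E_0(w)]$, I split
\[
 E(s,q)-E_0(w)=\bigl[E(s,q)-E(s,w)\bigr]+\bigl[E(s,w)-E_0(w)\bigr],
\]
bound the first difference by $\norm{\nabla E(s)}_{L^\infty}\cdot(s\abs{z}+sc_0^2\ip{\ln s})$, and apply \eqref{eq:EdiffVar} at $(s_0,s_1)=(0,s)$ to the second, giving $\lesssim s\ip{\ln s}\norm{\mathbf j}_{L^\infty}+s^2[\cdots]$. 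Combined with the crude bound $s^{-1}(\abs{E(s,q)}+\abs{E_0(w)})\le 2c_0^2 s^{-1}$, this produces $\abs{\nabla_w\mathcal K}\le c_0^2\bigl(\min\{s^{-1},\abs{z}\}+\ip{\ln s}^3\bigr)$ provided $T^*$ is taken small enough.

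For part (ii), the estimates $\nabla_z^2\mathcal K=-\lambda s\nabla E$ and $\nabla_w\nabla_z\mathcal K=-\lambda\nabla E+O(s\ln(s))$ reduce to bounding $\norm{\nabla E(s)}_{L^\infty}\lesssim c_0^2\ip{\ln s}^4$; this follows from \eqref{ControlEF'} with the moment/derivative control above, using that $s\theta\le 1$ to absorb the weight. The delicate term is $\nabla_w^2\mathcal K$, which must be estimated against the weight $\theta^{-1}\sim 1/\ip{z}+s$. Here I split into two regimes:
\begin{itemize}
\item For $s\abs{z}\le 1$ (so $\theta\sim\ip{z}$), differentiating the decomposition of $E(s,q)-E_0(w)$ gives $s^{-1}\abs{\nabla_w[E(s,q)-E_0(w)]}\lesssim \norm{\nabla^2 E}_{L^\infty}(\abs{z}+c_0^2\ip{\ln s})+\norm{\nabla E(s,\cdot)-\nabla E_0}_{L^\infty}/s$; the first factor is controlled by Lemma \ref{ControlEPropReg} and the second by \eqref{eq:gradEdiff} at $(0,s)$, both of order $c_0^2\ip{\ln s}^4$.
\item For $s\abs{z}\ge 1$ (so $\theta\sim 1/s$), use the crude bound $\abs{\nabla_w[E(s,q)-E_0(w)]}\le 2\norm{\nabla E}_{L^\infty}(1+s\ip{\ln s}c_0^2)$ directly, which after multiplication by $s^{-1}\theta$ gives $\lesssim \norm{\nabla E}_{L^\infty}\lesssim c_0^2\ip{\ln s}^4$.
\end{itemize}
The remaining lower-order terms in \eqref{FormulasHessianK} are routinely estimated by $c_0^4\ip{\ln s}^5$ using the $W^{3,\infty}$ bound on $E_0$ and $\norm{\nabla E(s)}_{L^\infty}\lesssim c_0^2\ip{\ln s}^4$. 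The main obstacle is the careful handling of this two-regime decomposition for $\nabla_w^2\mathcal K$, since the weight $\theta$ transitions between growing in $\abs{z}$ and saturating at $1/s$; keeping track of the logarithmic losses from \eqref{eq:gradEdiff} and Lemma \ref{ControlEPropReg} within the budget $\ip{\ln s}^4$ is where the bookkeeping is tightest.
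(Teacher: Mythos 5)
Your proposal has the right overall architecture for both parts (transfer norms via the unit-Jacobian map, then estimate $E$ and its differences via Section \ref{SecE}, then plug into the formulas for $\nabla_{w,z}\mathcal K$ and $\nabla^2_{w,z}\mathcal K$). However, there is a genuine gap in part~(i): you decompose
\[
 E(s,q)-E_0(w)=\bigl[E(s,q)-E(s,w)\bigr]+\bigl[E(s,w)-E_0(w)\bigr]
\]
and bound the first piece by $\norm{\nabla E(s)}_{L^\infty}\cdot\abs{q-w}$. But part~(i) of the lemma only assumes the \emph{first} line of \eqref{MWOBootstarpSigma}, i.e.\ moment control but no derivative control on $\sigma$. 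Without $\nabla_{w,z}\sigma\in L^\infty$ you cannot bound $\nabla_q\gamma$, hence you cannot invoke \eqref{ControlEF'} (or the second line of \eqref{ControlEF}) to get $\norm{\nabla E(s)}_{L^\infty}$ -- that control is precisely what is deferred to part~(ii). The paper sidesteps this by splitting the other way,
\[
 E(s,q)-E_0(w)=\bigl[E(s,q)-E_0(q)\bigr]+\bigl[E_0(q)-E_0(w)\bigr],
\]
so that the spatial shift piece only sees the \emph{fixed} field $E_0$, whose Lipschitz bound $\norm{\nabla E_0}_{L^\infty}\le c_0^2$ comes from the assumption \eqref{eq:Einfty_bds} rather than from the bootstrap; the temporal piece $E(s,q)-E_0(q)$ is handled by \eqref{eq:EdiffVar} dyadically (yielding \eqref{eq:EDecay}), exactly as you do for your own temporal piece. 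Your version could be salvaged by further rewriting $E(s,q)-E(s,w)$ as $E_0(q)-E_0(w)$ plus a bounded temporal correction, but that just reproduces the paper's decomposition.

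Part (ii) is fine: once derivative control is available, your two-regime split in $s\abs{z}\lessgtr 1$ for $\theta^{-1}\nabla^2_w\mathcal K$ is equivalent to the paper's unified bound by $c_0^2\bigl(\min\{s^{-1},\abs{z}\}+\ip{\ln s}^4\bigr)$ (note $\theta^{-1}\min\{s^{-1},\abs{z}\}\lesssim 1$ directly, so the paper does not need the explicit case split). The ingredients -- Lemma \ref{ControlEPropReg} for $\nabla^2 E$, \eqref{eq:gradEdiff}/\eqref{eq:EdiffVarGrad} for the temporal difference of $\nabla E$, and the $W^{3,\infty}$ bound for $E_0$ -- match the paper's.
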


\begin{proof}

$(i)$ In order to use Lemma \ref{lem:Ediff}, we observe that
\begin{equation}\label{ComparisonNewOldCoord}
\vert p-z\vert\le c_0^2\langle \ln(s)\rangle,\qquad \vert q-w\vert\le s\vert z\vert+c_0^2s\langle\ln(s)\rangle
\end{equation}
and that the change of variable \eqref{ChangeOfCoord} preserves volume, so that
\begin{equation*}
\begin{split}
\Vert \gamma(s)\Vert_{L^r_{q,p}}&=\Vert \sigma(s)\Vert_{L^r_{w,z}},\\
\Vert \vert p\vert^\alpha \gamma(s)\Vert_{L^r_{q,p}}&\lesssim \Vert \vert z\vert^\alpha\gamma(s)\Vert_{L^r_{q,p}}+c_0^{2\alpha} \langle\ln(s)\rangle^\alpha\Vert\gamma(s)\Vert_{L^r_{q,p}}\lesssim c_0^{2\alpha} \langle\ln(s)\rangle^\alpha\Vert \sigma(s)\Vert_{L^r_{w,z}}+\Vert \vert z\vert^\alpha\sigma(s)\Vert_{L^r_{w,z}}.
\end{split}
\end{equation*}
In addition, since (see \eqref{JacobChangeCoord}) $\partial z/\partial p=Id+O(c_0s\langle\ln(s)\rangle)$ has bounded Jacobian, we see that
\begin{equation*}
\begin{split}
\Vert {\bf j}(s)\Vert_{L^\infty_q}&\le\Vert\int\left[\vert z\vert+c_0^2\langle\ln(s)\rangle\right]\sigma^2dz\Vert_{L^\infty_w}\le c_0^2\langle \ln(s)\rangle\Vert\langle z\rangle^3\sigma\Vert_{L^\infty_{w,z}}^2
\end{split}
\end{equation*}
and using \eqref{eq:EdiffVar}, we obtain that for $2^{-k-1}\le s_2\le s_1\le 2^{-k}$,
\begin{equation*}
\begin{split}
\Vert E(s_2,q)-E(s_1,q)\Vert_{L^\infty_q}&\lesssim \langle c_0\rangle^2 k^22^{-k}\Vert \langle z\rangle^{3}\sigma\Vert_{L^\infty_{s,w,z}}^2+2^{-2k}\Vert \sigma\Vert_{L^\infty_sL^2_{w,z}}^2\\
\end{split}
\end{equation*}
and summing we see that $E(2^{-k})$ is Cauchy in $L^\infty_{q}$ and that
 \begin{equation}\label{eq:EDecay}
\|E(s,q)-E_0(q)\|_{L^\infty_q} \lesssim \langle c_0\rangle^2  s\langle \ln(s)\rangle^2 \|\langle z\rangle^{3} \sigma\|_{L^\infty_{s,w,z}}^2+\langle c_0\rangle^2 s^2\| \sigma\|_{L^\infty_sL^2_{w,z}}^2.
\end{equation}
Using the formulas in  \eqref{eq:dwK}, the control on $\nabla_z\mathcal{K}$ follows directly, while we see that
\begin{equation*}
\begin{split}
\nabla_w\mathcal{K}(w,z)&=-\lambda s^{-1}\left\{E(s,q)-E_0(q)\right\}-\lambda s^{-1}\left\{E_0(q)-E_0(w)\right\}+O(c_0^2\langle\ln(s)\rangle)
\end{split}
\end{equation*}
and with \eqref{eq:EDecay}, \eqref{MWOBootstarpSigma}, \eqref{eq:Einfty_bds} and \eqref{ComparisonNewOldCoord}, we obtain \eqref{MWOBootstarpE}.

\medskip

$(ii)$ We want to use \eqref{eq:EdiffVarGrad}, which requires some additional control on the derivatives. From \eqref{JacobChangeCoord} we see that
\begin{equation*}
\nabla_q\gamma=\nabla_w\sigma-\lambda\ln(s)\nabla E_0\cdot\nabla_z\sigma
\end{equation*}
so that
\begin{equation*}
\begin{split}
\Vert \nabla_q\gamma\Vert_{L^r_{q,p}}&\lesssim c_0^2\langle\ln(s)\rangle\Vert \nabla_{w,z}\sigma\Vert_{L^r_{w,z}},
\end{split}
\end{equation*}
and
\begin{equation*}
\begin{split}
\Vert \nabla_q{\bf j}(s)\Vert_{L^\infty_q}&\lesssim c_0^2\langle \ln(s)\rangle\Vert\int\left[\vert z\vert+c_0^2\langle\ln(s)\rangle\right]\vert \sigma\vert \cdot\vert\nabla_{w,z}\sigma\vert dz\Vert_{L^\infty_w}\\
&\lesssim c_0^4\langle \ln(s)\rangle^2\left[\Vert\langle z\rangle^5\sigma\Vert_{L^\infty_{w,z}}^2+\Vert \nabla_{w,z}\sigma\Vert_{L^\infty_{w,z}}^2\right].
\end{split}
\end{equation*}

For $2^{-k-1}\le s_2\le s_1\le 2^{-k}$ this gives by \eqref{eq:EdiffVarGrad} that
 \begin{equation*}
 \begin{aligned}
 \|\nabla_q E(s_2,q)-\nabla_q E(s_1,q)\|_{L^\infty_q} &\lesssim \langle c_0\rangle^4 k^32^{-k}\cdot \Big(\|\langle z\rangle^{5} \sigma\|_{L^\infty_{s,w,z})} ^2+\| \nabla_{w,z} \sigma\|_{L^\infty_{s,w,z}}^2\Big) \\
 &\quad+ c_0^{10}2^{-3k/2}\left[\Vert\langle z\rangle^5\sigma\Vert_{L^\infty_{s,w,z}}^2+\Vert \sigma\Vert_{L^\infty_sL^2_{w,z}}^2\right],
\end{aligned}
\end{equation*}
and applying similar arguments as before we obtain
 \begin{equation}\label{eq:GradEDecay}
 \begin{aligned}
 \|\nabla_q E(s,q)-\nabla_q E_0(q)\|_{L^\infty_q} &\lesssim \langle c_0\rangle^2 s \langle \ln(s)\rangle^3\cdot \Big(\|\langle z\rangle^{5} \sigma\|_{L^\infty_{s,w,z}} ^2+\| \nabla_{w,z} \sigma(s,w,z)\|_{L^\infty_{s,w,z}}^2\Big) \\
 &\quad+ c_0^{10}s^\frac{3}{2}\left[\Vert\langle z\rangle^5\sigma\Vert_{L^\infty_{s,w,z}}^2+\Vert\sigma\Vert_{L^\infty_sL^2_{w,z}}^2\right]\\
 &\le c_0^2s\langle \ln(s)\rangle^4
\end{aligned}
\end{equation}
up to choosing $T(c_0)>0$ small enough. Using the formulas in \eqref{FormulasHessianK} we directly see that
\begin{equation*}
\begin{split}
\theta \vert \nabla^2_{z,z}\mathcal{K}\vert&\le \vert \nabla E\vert\cdot s\min\{s^{-1},\vert z\vert\}\le c_0^2,\\
\vert \nabla^2_{w,z}\mathcal{K}\vert&\le \vert\nabla E\vert(1+s\langle \ln(s)\rangle\vert\nabla E_0\vert)\le 2c_0^2.
\end{split}
\end{equation*}
Moreover, using \eqref{eq:Einfty_bds} and \eqref{eq:GradEDecay} we find that, up to choosing $T(c_0)>0$ smaller,
\begin{equation*}
\begin{split}
\vert\nabla^2_{w,w}\mathcal{K}\vert&\le s^{-1}\vert \nabla E_0(q)-\nabla E_0(w)\vert+s^{-1}\vert\nabla E(s,q)-\nabla E_0(w)\vert\\
&\quad+\langle\ln(s)\rangle\left[2\vert\nabla E_0\vert^2\vert\nabla E\vert+\vert\nabla^2E_0\vert\vert E\vert\right]+s\langle\ln(s)\rangle^2\vert\nabla E_0\vert^2\vert\nabla E\vert\\
&\le c_0^2\min\{s^{-1},\vert z\vert\}+c_0^2\langle\ln(s)\rangle^4,
\end{split}
\end{equation*}
from which we deduce \eqref{ControlHessianK}.
\end{proof}

\subsubsection{A Priori Estimates on the Particle Density}\label{ssec:apriori-sigma}
Here we close the bootstrap of \eqref{MWOBootstarpSigma}:

\begin{lemma}\label{BootstrapSigmaLem}
Assume that $\sigma\in C^0([0,T],L^2_{w,z})$ satisfies \eqref{eq:sigma}, for some Hamiltonian $\mathcal{K}$ (not necessarily related to $\sigma$) satisfying \eqref{MWOBootstarpE} and \eqref{ControlHessianK}. If $\sigma_0=\sigma(0)$ satisfies \eqref{AssumptionsSigma0}, there exists $T(c_0)>0$ such that \eqref{MWOBootstarpSigma} holds for $A=B=C=2c_0$.
\end{lemma}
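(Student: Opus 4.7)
My plan is a direct bootstrap argument: \eqref{eq:sigma} is a transport equation along the volume-preserving Hamiltonian flow of $\mathcal K$, and the Hamiltonian bounds \eqref{MWOBootstarpE}--\eqref{ControlHessianK} will provide transport coefficients integrable in $s$ near $0$. For each quantity $h$ appearing in \eqref{MWOBootstarpSigma} I would derive an equation of the form
\[
 \mathfrak L h \;=\; -\alpha\, h + R, \qquad \alpha \ge 0,
\]
where the damping coefficient $\alpha$ arises from $\partial_s \theta^k = -k\theta^{k+1}$, and the remainder satisfies $\|R\|_{L^\infty_{w,z}} \le Cc_0^2 \langle\ln s\rangle^m \cdot (\text{already controlled norms})$. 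Integrating along characteristics and discarding the favorable damping in the integrating factor yields $\|h(s)\|_{L^\infty_{w,z}} \le \|h(0)\|_{L^\infty_{w,z}} + \int_0^s \|R(s')\|_{L^\infty_{w,z}} ds'$, so a Grönwall argument closes each bound at $2c_0$ provided $T(c_0)$ is chosen so that $Cc_0^2 \int_0^{T(c_0)} \langle\ln s'\rangle^m ds' \le \ln 2$.

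The first line of \eqref{MWOBootstarpSigma} is easy: $\|\sigma(s)\|_{L^2_{w,z}} = \|\sigma_0\|_{L^2_{w,z}} \le c_0$ from the divergence-free flow, while for the weighted moment I compute $\mathfrak L[\langle z\rangle^5 \sigma] = -5\langle z\rangle^3 (z\cdot \nabla_w \mathcal K)\sigma$ and use $\langle z\rangle^4 \min(s^{-1},|z|) \le \langle z\rangle^5$ together with \eqref{MWOBootstarpE} to obtain $|\mathfrak L[\langle z\rangle^5\sigma]| \le Cc_0^2 \langle \ln s\rangle^3 |\langle z\rangle^5\sigma|$, which Grönwall closes.

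For the first derivatives I use \eqref{eq:gradComm} and \eqref{ControlHessianK}. In the equation for $\nabla_w\sigma$, the coefficient $\nabla_w\nabla_z \mathcal K$ is pointwise $\lesssim c_0^2 \langle\ln s\rangle^4$, and the cross term $\nabla_w^2\mathcal K\cdot \nabla_z\sigma$ is tamed by combining $|\nabla_w^2 \mathcal K| \le \theta\,c_0^2\langle\ln s\rangle^4$ with $|\nabla_z\sigma| \le \theta^{-1}\|\theta\nabla_z\sigma\|_{L^\infty_{w,z}}$, making the $\theta$-factors cancel. For $\theta\nabla_z\sigma$ the key identity $\mathfrak L\theta = -\theta^2 - \nabla_z\theta\cdot \nabla_w\mathcal K$ produces the favorable damping $-\theta\cdot(\theta\nabla_z\sigma)$ together with a lower-order correction; the latter is harmless because $|\nabla_z\theta|/\theta \le \langle z\rangle^{-1}$ exactly absorbs the otherwise singular factor $\min(s^{-1},|z|)$ in $|\nabla_w\mathcal K|$. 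All remaining terms are bounded by $Cc_0^2\langle\ln s\rangle^4$ times $\|\nabla_w\sigma\|_{L^\infty_{w,z}}+\|\theta\nabla_z\sigma\|_{L^\infty_{w,z}}$, and Grönwall on this sum closes the second line of \eqref{MWOBootstarpSigma}.

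Second derivatives follow the same template with weights $1,\theta,\theta^2$ on the three blocks $\nabla_w^2,\nabla_{w,z}^2,\nabla_z^2$ respectively; since $\mathfrak L \theta^2 = -2\theta^3 + (\text{lower order})$, each weighted block again comes with a sign-favorable damping. I expect the main obstacle to be the term $\{\partial_i\partial_j \mathcal K, \sigma\}$ appearing in \eqref{eq:GgradComm}, which formally involves $\nabla^3 \mathcal K$ not directly given by the hypotheses. I would handle it by differentiating the explicit formulas \eqref{FormulasHessianK} once more, using the $W^{3,\infty}$ control of $E_0$ from \eqref{eq:Einfty_bds} and a bound on $\nabla^2 E$ obtained as in Lemma \ref{ControlEPropReg} from the derivative and moment bounds already established. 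Verifying that each resulting contribution, after the appropriate $\theta$-weight is placed on it, produces a coefficient of the form $Cc_0^2\langle\ln s\rangle^m$ is the delicate accounting; once this is in place, a final Grönwall argument with $T(c_0)$ small enough closes the third line of \eqref{MWOBootstarpSigma} at $2c_0$.
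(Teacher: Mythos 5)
Your proposal is correct and follows essentially the same route as the paper: propagate the $L^2$ and weighted $L^\infty$ moments via the commutation relation $\mathfrak L[\langle z\rangle^m\sigma]=-m\langle z\rangle^{m-2}(z\cdot\nabla_w\mathcal K)\sigma$ together with \eqref{MWOBootstarpE}; propagate first derivatives by pairing $\nabla_w\sigma$ with $\theta\nabla_z\sigma$, using the identity $\mathfrak L(\ln\theta)=-\bigl(1+\tfrac{z}{\langle z\rangle^3}\cdot\nabla_w\mathcal K\bigr)\theta$ to get an upper bound that is integrable in $s$ and the $\theta$-conjugated Hessian bounds \eqref{ControlHessianK} for the coupled system; and propagate second derivatives with weights $1,\theta,\theta^2$, supplying the missing $\nabla^3\mathcal K$ bounds by differentiating \eqref{FormulasHessianK} with $E_0\in W^{3,\infty}$ and a $\nabla^2 E$ bound from Lemma \ref{ControlEPropReg}. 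You correctly identified the one genuine wrinkle, namely that the Lemma's stated hypotheses on $\mathcal K$ do not directly furnish third derivative bounds, and that one must fall back on the explicit structure of $\mathcal K$ (as the paper does to prove \eqref{3DerK}).
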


\begin{proof}
We first close the bootstrap for $A$, then for $A,B$. Finally we adapt the argument for $A,B,C$. The control follows from the commutation relations (compare with \eqref{eq:gradComm}):
\begin{equation}\label{CommutationRelationsSigma}
\begin{split}
\mathfrak{L}(\langle z\rangle^m\sigma)&=\sigma\{\langle z\rangle^m,\mathcal{K}\}=-m \sigma\langle z\rangle^{m-2} z\cdot \nabla_w\mathcal{K} ,\\
\mathfrak{L}(\theta\nabla_z\sigma)&=(\mathfrak{L}\ln\theta)\cdot\theta \nabla_z\sigma+\theta\{\nabla_z\mathcal{K},\sigma\}=(\mathfrak{L}\ln\theta)\cdot\theta \nabla_z\sigma+\theta\nabla_z\sigma\cdot \nabla_w\nabla_z\mathcal{K}-\nabla_w\sigma\cdot \theta\nabla_z\nabla_z\mathcal{K},\\
\mathfrak{L}(\nabla_w\sigma)&=\{\nabla_w\mathcal{K},\sigma\}=\theta\nabla_z\sigma\cdot \theta^{-1}\nabla_w\nabla_w\mathcal{K}-\nabla_w\sigma\cdot\nabla_z\nabla_w\mathcal{K}.
\end{split}
\end{equation}
As in \eqref{eq:transp_bd}, we find that
\begin{equation*}
\begin{split}
\Vert \langle z\rangle^m\sigma(s)\Vert_{L^r_{w,z}}&\le \Vert \langle z\rangle^m\sigma_0\Vert_{L^r_{w,z}}+m\int_0^s\Vert \langle z\rangle^{-1}\nabla_{w}\mathcal{K}(s^\prime)\Vert_{L^\infty_{w,z}}\Vert \langle z\rangle^m\sigma(s^\prime)\Vert_{L^r_{w,z}}ds^\prime,
\end{split}
\end{equation*}
and we can easily propagate the first line of \eqref{MWOBootstarpSigma}.

\medskip

For the derivatives, we also need to control $\theta$.
On the one hand, we can bound from above (note that $\mathfrak{L}(\ln\theta)$ can be very negative)
\begin{equation*}
\begin{split}
&\mathfrak{L}(\ln\theta)=-(1+\frac{z}{\langle z\rangle^3}\nabla_w\mathcal{K})\theta\lesssim c_0^2+c_0^2\langle \ln(s)\rangle^3\\
\end{split}
\end{equation*}
and we deduce from \eqref{CommutationRelationsSigma}, \eqref{eq:transp_bd} and \eqref{ControlHessianK} that
\begin{equation*}
\begin{split}
\Vert \theta\nabla_z\sigma(s)\Vert_{L^r_{w,z}}&\le\Vert \theta\nabla_z\sigma_0\Vert_{L^r_{w,z}}+ c_0^2\int_0^s\langle\ln(s^\prime)\rangle^4\left\{\Vert \theta\nabla_z\sigma(s^\prime)\Vert_{L^r_{w,z}}+\Vert\nabla_w\sigma(s^\prime)\Vert_{L^r_{w,z}}\right\}ds^\prime,\\
\Vert \nabla_w\sigma(s)\Vert_{L^r_{w,z}}&\le\Vert \nabla_w\sigma_0\Vert_{L^r_{w,z}}+ c_0^2\int_0^s\langle\ln(s^\prime)\rangle^4\left\{\Vert \theta\nabla_z\sigma(s^\prime)\Vert_{L^r_{w,z}}ds+\Vert\nabla_w\sigma(s^\prime)\Vert_{L^r_{w,z}}\right\}ds^\prime,\\
\end{split}
\end{equation*}
and this allows us to propagate the second line of \eqref{MWOBootstarpSigma} for short time.

\medskip

We now propagate higher order derivatives to bound the bootstrap for $C$. First by interpolation in \eqref{MWOBootstarpSigma}, we observe that
\begin{equation*}
\begin{split}
\Vert \langle z\rangle^{2.1}\nabla_{w,z}\sigma\Vert_{L^\infty_{w,z}}&\le A+C.
\end{split}
\end{equation*}
We will use the weight $\theta$ to control the $\partial_z$ derivatives. Using \eqref{eq:GgradComm}, we find that
\begin{equation*}
\begin{split}
\mathfrak{L}(\partial_{w^j}\partial_{w^k}\sigma)&=\theta^{-1}\nabla_w\partial_{w^j}\mathcal{K}\cdot (\theta\nabla_z\partial_{w^k}\sigma)+\theta^{-1}\nabla_w\partial_{w^k}\mathcal{K}\cdot(\theta\nabla_z\partial_{w^j}\sigma)-\nabla_z\partial_{w^j}\mathcal{K}\cdot\nabla_w\partial_{w^k}\sigma\\
&\quad-\nabla_z\partial_{w^k}\mathcal{K}\cdot\nabla_w\partial_{w^j}\sigma+\theta^{-1}\nabla_w\partial_{w^j}\partial_{w^k}\mathcal{K}\cdot(\theta\nabla_z\sigma)-\nabla_z\partial_{w^j}\partial_{w^k}\mathcal{K}\cdot\nabla_w\sigma,\\
\mathfrak{L}(\theta\partial_{z^j}\partial_{w^k}\sigma)&=\mathfrak{L}(\ln\theta)\cdot\theta\partial_{z^j}\partial_{w^k}\sigma+\theta^{-1}\nabla_w\partial_{w^k}\mathcal{K}\cdot(\theta^2\nabla_z\partial_{z^j}\sigma)-\nabla_z\partial_{w^k}\mathcal{K}\cdot(\theta\nabla_w\partial_{z^j}\sigma)\\
&\quad+\nabla_w\partial_{z^j}\mathcal{K}\cdot (\theta\nabla_z\partial_{w^k}\sigma)-(\theta\nabla_z\partial_{z^j}\mathcal{K})\cdot\nabla_w\partial_{w^k}\sigma\\
&\quad+\nabla_w\partial_{z^j}\partial_{w^k}\mathcal{K}\cdot(\theta\nabla_z\sigma)-\theta\nabla_z\partial_{z^j}\partial_{w^k}\mathcal{K}\cdot\nabla_w\sigma,\\
\mathfrak{L}(\theta^2\partial_{z^j}\partial_{z^k}\sigma)&=2\mathfrak{L}(\ln\theta)\cdot\theta^2\partial_{z^j}\partial_{z^k}\sigma+\nabla_w\partial_{z^k}\mathcal{K}\cdot(\theta^2\nabla_z\partial_{z^j}\sigma)-\theta\nabla_z\partial_{z^k}\mathcal{K}\cdot(\theta\nabla_w\partial_{z^j}\sigma)\\
&\quad+\nabla_w\partial_{z^j}\mathcal{K}\cdot (\theta^2\nabla_z\partial_{z^k}\sigma)-(\theta\nabla_z\partial_{z^j}\mathcal{K})\cdot (\theta\nabla_w\partial_{z^k}\sigma)\\
&\quad+\theta\nabla_w\partial_{z^j}\partial_{z^k}\mathcal{K}\cdot(\theta\nabla_z\sigma)-\theta^2\nabla_z\partial_{z^j}\partial_{z^k}\mathcal{K}\cdot\nabla_w\sigma,
\end{split}
\end{equation*}
and we can proceed as for the case of one derivative once we control the new terms
\begin{equation}\label{3DerK}
\begin{split}
\Vert \theta^{-1}\nabla^3_{w,w,w}\mathcal{K}\Vert_{L^\infty_{w,z}}+\Vert \nabla^3_{w,w,z}\mathcal{K}\Vert_{L^\infty_{w,z}}+\Vert \theta\nabla^3_{w,z,z}\mathcal{K}\Vert_{L^\infty_{w,z}}+\Vert \theta^2\nabla^3_{z,z,z}\mathcal{K}\Vert_{L^\infty_{w,z}}&\le c_0^2\langle\ln(s)\rangle^5.
\end{split}
\end{equation}
It remains to prove \eqref{3DerK}. Starting from
\begin{equation*}
\begin{split}
\nabla_z\mathcal{K}=-\lambda E(q),\qquad \frac{\partial q^k}{\partial z^j}=s\delta_j^k,\qquad\frac{\partial q^k}{\partial w^j}=\delta_j^k-\lambda s\ln(s)\partial_j\partial_k\phi_0(w)
\end{split}
\end{equation*}
we deduce
\begin{equation*}
\begin{split}
\theta^2\vert \nabla^3_{z,z,z}\mathcal{K}\vert&=(s\theta)^2\vert\nabla^2E(q)\vert,\\
\theta\vert\nabla^3_{w,z,z}\mathcal{K}\vert&\le (s\theta)\cdot\left[1+s\langle \ln(s)\rangle\vert\nabla E_0\vert\right]\cdot \vert\nabla^2E(q)\vert,\\
\vert\nabla^3_{w,w,z}\mathcal{K}\vert&\le\left[1+s\langle \ln(s)\rangle\vert\nabla E_0\vert\right]^2\cdot \vert\nabla^2E(q)\vert+\left[1+s\langle \ln(s)\rangle\vert\nabla E_0\vert\right]\cdot\left[1+s\langle \ln(s)\rangle\vert\nabla^2 E_0\vert\right]\cdot \vert\nabla E(q)\vert,\\
\end{split}
\end{equation*}
and finally, from \eqref{FormulasHessianK}, we obtain that
\begin{equation*}
\begin{split}
\theta^{-1}\vert\nabla^3_{w,w,w}\mathcal{K}\vert&\le \left[s^{-1}+\langle \ln(s)\rangle\cdot \vert\nabla E_0\vert\right]\cdot \vert\nabla^2E(s,q)-\nabla^2E_0(w)\vert\\
&\quad+\langle\ln(s)\rangle\cdot \left[\vert\nabla^2E\vert\cdot\vert\nabla E_0\vert+\vert\nabla E\vert\cdot \vert\nabla^2E_0\vert+\vert\nabla^3E_0\vert\cdot\vert E\vert\right]\\
&\quad+s\langle\ln(s)\rangle^2\cdot\left[\vert\nabla^2 E\vert\cdot\vert\nabla E_0\vert^2+\vert\nabla E\vert \cdot\vert\nabla E_0\vert\cdot\vert\nabla^2E_0\vert\right]\\
&\quad+s^2\langle\ln(s)\rangle^3\cdot\left[\vert \nabla^2 E\vert\cdot\vert\nabla E_0\vert^3\right].
\end{split}
\end{equation*}

Independently, we find that
\begin{equation*}
\begin{split}
\Vert\nabla^2{\bf j}(s)\Vert_{L^\infty_q} &\lesssim c_0^2\langle \ln(s)\rangle^2\Vert\int\left[\vert z\vert+c_0^2\langle\ln(s)\rangle\right]\cdot\left[\vert \sigma\vert \cdot\vert\nabla^2_{w,z}\sigma\vert+\vert\nabla_{w,z}\sigma\vert^2\right] dz\Vert_{L^\infty_w}\\
&\lesssim c_0^4\langle\ln(s)\rangle^3\left[\Vert \langle z\rangle^5\sigma\Vert_{L^\infty_{w,z}}\Vert\nabla^2_{w,z}\sigma\Vert_{L^\infty_{w,z}}+\Vert\langle z\rangle^{2.1}\nabla_{w,z}\sigma\Vert_{L^\infty_{w,z}}^2\right].
\end{split}
\end{equation*}

Now using Lemma \ref{ControlEPropReg} and the bootstrap assumptions, we obtain that
\begin{equation*}
\begin{split}
\Vert\nabla^2E(s,q)-\nabla^2E_0(w)\Vert_{L^\infty_{w,z}}&\le c_0^2\langle \ln(s)\rangle^5+c_0^2\min\{s^{-1},\vert z\vert\}
\end{split}
\end{equation*}
which easily leads to \eqref{3DerK}.
\end{proof}

\subsection{Local Solutions}\label{ssec:waveops_lwp}

We construct local solutions for the singular equation \eqref{eq:sigma} via Picard iteration.

\begin{proof}[Proof of Theorem \ref{WaveOpsThm}]
We proceed in two steps.
\paragraph{{\bf Step 1: A priori estimates}.} We construct a sequence of approximate solutions on a time interval $[0,T]$ (with $T>0$ to be chosen later) via Picard iteration: We define $\sigma_{(0)}(s,w,z):=\sigma_0(w,z)$, and given $\sigma_{(n)}\in C^0_s([0,T],C^1_{w,z})$ satisfying \eqref{MWOBootstarpSigma} with $A=B=C=4c_0$, we let $\sigma_{(n+1)}\in C^0_s([0,T], C^1_{w,z})$  be the  solution of
 \begin{equation}
 \begin{aligned}
  &\partial_s\sigma_{(n+1)}+\{\sigma_{(n+1)},\K_n\}=0,\qquad \sigma_{(n+1)}(0)=\sigma_0,\\
  &\K_n:=\lambda s^{-1}(\phi_n(s,q)-\phi_0(w)),\qquad -\Delta\phi_n=\int\gamma_{(n)}^2(s,q,p)dp,
 \end{aligned} 
 \end{equation}
 where $\gamma_{(n)}$ and $\sigma_{(n)}$ are related through \eqref{SigmaGamma}. Using Lemma \ref{lem:EDecay}, we see that $\mathcal{K}_n$ satisfies \eqref{MWOBootstarpE} and \eqref{ControlHessianK}. Using Lemma \ref{BootstrapSigmaLem}, we see that $\sigma_{(n+1)}$ satisfies \eqref{MWOBootstarpSigma} with $A=B=C=2c_0$. We deduce that \eqref{MWOBootstarpSigma} holds uniformly in $n$ with $A=B=C=2c_0$ on a fixed time interval $0\le s\le T(c_0)$.
 
 In addition using the commutation relations \eqref{CommutationRelationsAddedAtTheEnd}, we easily propagate \eqref{PropagationLawsSigma} uniformly in $n$.
 
 \paragraph{{\bf Step 2: Contraction in $L^\infty_{s,w,z}$}} 
 Let
\begin{equation*}
\begin{split}
\delta_{(n)}&:=\sigma_{(n+1)}-\sigma_{(n)},\qquad\delta\mathcal{K}_{(n)}:=\K_n-\K_{n-1},\qquad
\mathfrak{L}_n:=\partial_s+\left\{\cdot,\mathcal{K}_n\right\},\qquad\delta\mathfrak{L}_n=\{\cdot,\delta\mathcal{K}_{(n)}\},
\end{split}
\end{equation*}
so that
\begin{equation}\label{EvolEqDelta}
\begin{split}
\mathfrak{L}_n\delta_{(n)}&=\delta\mathfrak{L}_n\sigma_{(n)},
\end{split}
\end{equation}
and we can express
\begin{equation*}
\begin{split}
\nabla_z\delta\mathcal{K}_{(n)}&=-\lambda(E_n(s,q)-E_{n-1}(s,q)),\\
\nabla_w\delta\mathcal{K}_{(n)}&=-\lambda s^{-1}(E_n(s,q)-E_{n-1}(s,q))-\lambda^2\ln(s)(E_n(s,q)-E_{n-1}(s,q))\cdot\nabla E_0(q).
\end{split}
\end{equation*} 
Invoking the uniform bounds for $\sigma_{(n)}$ we will prove below that
\begin{equation}\label{ControlELWP}
\begin{split}
\Vert \nabla_{w,z}\delta\mathcal{K}_{(n)}(s)\Vert_{L^\infty_{w,z}}&\le  c_0\langle \ln(s)\rangle^6\norm{\delta_{(n-1)}(s)}_{L^\infty_{w,z}}.
\end{split}
\end{equation}
In combination with \eqref{EvolEqDelta}, we find that
\begin{equation*}
\begin{split}
\Vert \delta_{(n)}(s)\Vert_{L^\infty_{w,z}}&\lesssim \int_0^s\Vert\nabla_{w,z}\delta\mathcal{K}_{(n)}(s^\prime)\Vert_{L^\infty_{w,z}}\Vert \nabla_{w,z}\sigma_{(n)}(s^\prime)\Vert_{L^\infty_{w,z}}ds^\prime\lesssim c_0^2\int_0^s\langle\ln(s^\prime)\rangle^6\norm{\delta_{(n-1)}(s^\prime)}_{L^\infty_{w,z}}ds^\prime,
\end{split}
\end{equation*}
from which we deduce that, possibly taking $T(c_0)>0$ smaller,  $\sigma_{(n)}$ form a Cauchy sequence in $L^\infty_{s,w,z}$, and thus $\sigma_{(n)}\to \sigma\in L^\infty_{s,w,z}$ as $n\to \infty$. Interpolation gives convergence in the other topologies. In particular
\begin{equation}
 \norm{\nabla_{w,z}\delta_{(n)}}_{L^\infty_{s,w,z}}\lesssim\norm{\delta_{(n)}}_{L^\infty_{s,w,z}}^{1/2}\left[\norm{\nabla_{w,z}^2\sigma_{(n+1)}}_{L^\infty_{s,w,z}}+\Vert\nabla^2_{w,z}\sigma_{(n)}\Vert_{L^\infty_{s,w,z}}\right]^{1/2}
\end{equation}
so that $\sigma_{(n)}$ is Cauchy in $C^0_sC^1_{w,z}$ and the other bounds follow by Fatou's Lemma or by conservation. In particular, \eqref{PropagationLawsSigma} follows by pointwise convergence. Finally we note that if $c_0$ is sufficiently small, the arguments give a contraction for any $T\leq 2$.

\medskip

It remains to show \eqref{ControlELWP}. The main point is that $E$ is quadratic in $\gamma$, so that in the estimates for $\delta\mathcal{K}_{(n)}$ we can always factor out the difference $\delta_{(n)}$ in $L^\infty_{w,z}$. The bound on $\nabla_z\delta \mathcal{K}_{(n)}$ follows from adaptation to Lemma \ref{LemControlEF} and this also allows to control all but the first terms in $\nabla_w\delta \mathcal{K}_{(n)}$. These then follow from \eqref{eq:EdiffVar} using the difference continuity equation
\begin{equation*}
\begin{split}
\partial_s(E_n-E_{n-1})+\nabla\Delta^{-1}\hbox{div}_q\left\{\delta{\bf j}_n\right\},\qquad\delta{\bf j}_n=\int_{\mathbb{R}^3_p}(\gamma_{n}+\gamma_{n-1})(\gamma_n-\gamma_{n-1})\cdot pdp
\end{split}
\end{equation*}
with
\begin{equation*}
\begin{split}
\Vert \delta{\bf j}_n\Vert_{L^\infty_{q}}&\lesssim \Vert \delta_{(n-1)}\Vert_{L^\infty_{w,z}}\cdot\left[\Vert\langle p\rangle^5\gamma_n\Vert_{L^\infty_{q,p}}+\Vert\langle p\rangle^5\gamma_{n-1}\Vert_{L^\infty_{q,p}}\right]
\end{split}
\end{equation*}
and simple adaptations of Lemma \ref{lem:Ediff}.
\end{proof}

Finally, we prove the main theorem.
\begin{proof}[Proof of Theorem \ref{thm:main_simple}]

For \ref{it:modscat}, using \eqref{TransformationMuGamma}, the assumptions \eqref{eq:modscat_assump} lead to \eqref{MSAssID} in Theorem \ref{ModScatThm} and the local convergence is easily adapted (see Remark \ref{RemarkModScatThm}). For \ref{it:waveops}, assumptions \eqref{eq:waveop_assump} lead to \eqref{eq:Einfty_bds}-\eqref{AssumptionsSigma0} and the conclusion follows from that of Theorem \ref{WaveOpsThm}. Finally, for \ref{it:ScatOp}, we can apply Theorem \ref{WaveOpsThm} to $\mu_{-\infty}(a,-b)$ to get, using \eqref{SigmaGamma}, \eqref{TransformationMuGamma} and \eqref{PropagationLawsSigma} a solution for $-\infty< t\le-1$ such that
\begin{equation*}
\begin{split}
\Vert \mu(-1)\Vert_{L^2_{x,v}}+\Vert \langle x,v\rangle^5\mu(-1)\Vert_{L^\infty_{x,v}}+\Vert\nabla_{x,v}\mu(-1)\Vert_{L^\infty_{x,v}}\lesssim \varepsilon.
\end{split}
\end{equation*}
By local existence, we can extend these bounds for $-1\le t\le 1$, at which point we can simply apply \ref{it:modscat}.
\end{proof}

\bigskip

\bibliographystyle{abbrv}
\bibliography{VP_PCT.bib}

\end{document}